\documentclass[11pt]{article}

\usepackage[utf8]{inputenc}
\usepackage[T1]{fontenc}
\usepackage{lmodern}
\usepackage{microtype}
\usepackage{csquotes}

\usepackage[a4paper,margin=1in]{geometry}
\usepackage{setspace}
\usepackage{float}

\usepackage{amsmath,amssymb,amsfonts,amsthm}
\usepackage{mathtools}
\usepackage{bm}
\usepackage{dsfont}
\numberwithin{equation}{section}
\allowdisplaybreaks

\usepackage{graphicx}
\usepackage{booktabs}
\usepackage{array}
\usepackage{multirow}
\usepackage{xcolor}
\usepackage{subcaption}
\usepackage[backend=biber,style=alphabetic,maxnames=8,giveninits=true]{biblatex}
\usepackage{aliascnt}

\usepackage{hyperref}
\hypersetup{
  colorlinks=true,
  linkcolor=[rgb]{0.1,0.1,0.6},
  citecolor=[rgb]{0.0,0.45,0.0},
  urlcolor=[rgb]{0.0,0.3,0.6},
  pdfauthor={Witold Płecha},
  pdftitle={The Distributional Tail of Worst--Case Quickselect}
}
\usepackage[nameinlink,capitalise]{cleveref}

\newtheorem{theorem}{Theorem}[section]

\newaliascnt{lemma}{theorem}
\newtheorem{lemma}[lemma]{Lemma}
\aliascntresetthe{lemma}

\newaliascnt{proposition}{theorem}
\newtheorem{proposition}[proposition]{Proposition}
\aliascntresetthe{proposition}

\newaliascnt{corollary}{theorem}
\newtheorem{corollary}[corollary]{Corollary}
\aliascntresetthe{corollary}

\newaliascnt{claim}{theorem}

\aliascntresetthe{claim}

\theoremstyle{definition}

\newaliascnt{definition}{theorem}

\aliascntresetthe{definition}

\newaliascnt{assumption}{theorem}

\aliascntresetthe{assumption}

\newaliascnt{example}{theorem}

\aliascntresetthe{example}

\theoremstyle{remark}

\newaliascnt{remark}{theorem}
\newtheorem{remark}[remark]{Remark}
\aliascntresetthe{remark}

\crefname{theorem}{theorem}{theorems}
\Crefname{theorem}{Theorem}{Theorems}

\crefname{lemma}{lemma}{lemmas}
\Crefname{lemma}{Lemma}{Lemmas}

\crefname{proposition}{proposition}{propositions}
\Crefname{proposition}{Proposition}{Propositions}

\crefname{corollary}{corollary}{corollaries}
\Crefname{corollary}{Corollary}{Corollaries}

\crefname{claim}{claim}{claims}
\Crefname{claim}{Claim}{Claims}

\crefname{definition}{definition}{definitions}
\Crefname{definition}{Definition}{Definitions}

\crefname{assumption}{assumption}{assumptions}
\Crefname{assumption}{Assumption}{Assumptions}

\crefname{example}{example}{examples}
\Crefname{example}{Example}{Examples}

\crefname{remark}{remark}{remarks}
\Crefname{remark}{Remark}{Remarks}

\crefname{section}{section}{sections}
\Crefname{section}{Section}{Sections}

\crefname{equation}{equation}{equations}
\Crefname{equation}{Equation}{Equations}

\crefname{appendix}{appendix}{appendices}
\Crefname{appendix}{Appendix}{Appendices}

\usepackage{enumitem}
\setlist{nosep}

\DeclareMathOperator{\supp}{supp}

\newcommand{\R}{\mathbb{R}}

\newcommand{\PP}[1]{\mathbb{P}\!\left(#1\right)}
\newcommand{\EE}[1]{\mathbb{E}\!\left[#1\right]}

\newcommand{\1}{\mathds{1}}

\newcommand{\dd}{\mathrm{d}}
\newcommand{\defeq}{\vcentcolon=}

\newcommand{\eqd}{\stackrel{d}{=}}

\newcommand{\bigO}{\mathcal{O}}
\newcommand{\smallo}{o}

\newcommand{\Unif}{\mathrm{Unif}}
\newcommand{\Gam}{\Gamma}
\newcommand{\Exp}{\mathrm{Exp}}

\newcommand{\Svar}{S}             
\newcommand{\Tn}{T_n}             
\newcommand{\U}{U}                

\newcommand{\aj}{a_j(t)}
\newcommand{\athr}[1]{\log\!\frac{#1+1}{t}} 

\title{The Distributional Tail of Worst--Case Quickselect}

\author{
  Witold Płecha\\
  \small Mathematical Institute, University of Wrocław
}
\date{\today}

\begin{document}

\maketitle

\begin{abstract}
We study the almost surely finite random variable $S$ defined by the distributional fixed-point equation
\[
S \eqd 1+\max\{US',(1-U)S''\},
\qquad U\sim \Unif(0,1),
\]
where $S'$ and $S''$ are independent copies of $S$, independent of $U$.
This random variable arises as the almost sure limit of the normalized worst-case number of key comparisons used by classical \textsc{Quickselect} with uniformly chosen pivots in the model of Devroye.

Our first contribution concerns the right tail of $S$.
We prove explicit one-sided bounds for the rate function $-\log \PP{S>t}$ and, in particular, identify its first-order asymptotic growth:
\[
-\log \PP{S>t} = t\log t + \bigO\!\bigl(t\log\log t\bigr),
\qquad t\to\infty.
\]
The argument combines a binary-search-tree embedding and a one-level second-moment method with a moment-generating-function comparison inspired by ideas of Alsmeyer and Dyszewski for the nonhomogeneous smoothing transform.
As a byproduct, we obtain an explicit pointwise Chernoff majorant for the tail.

Our second contribution is a distribution-function scheme for deriving explicit upper bounds on $\EE{S}$.
Starting from the fixed-point equation at the level of the distribution function, we construct an order-preserving lower iteration and a conservative mesh discretization suited to computer-assisted upper bounds on the mean.
We illustrate the latter numerically in floating-point arithmetic, but do not pursue a certified numerical proof here.
\end{abstract}

\medskip
\noindent\textbf{Acknowledgements.}
This work grew out of the author's master's thesis prepared under the supervision of Piotr Dyszewski.
I thank him for suggesting the problem, for his guidance, and for many helpful discussions.

\section{Introduction and main results}

Quickselect, introduced by Hoare under the name \emph{Find}, is one of the classical randomized algorithms for selecting an order statistic from a finite set of distinct keys \cite{hoare1961find}. Its analysis forms part of the broader early tradition of the mathematical study of recursive algorithms; see, for example, \cite{knuth1972mathematical}. A substantial probabilistic literature on Quickselect emerged later.

Among the most studied cost measures is the number of key comparisons. In the classical unit-cost model, Quickselect has been analyzed probabilistically in a substantial body of work; see, for example, \cite{gruebel-roesler-1996,gruebel1998markov,hwang-tsai-2002,devroye2001find}. At the same time, probabilistic analyses of Quickselect are not confined to the unit-cost setting: more general comparison costs, including symbol-comparison models, were studied in \cite{fill-nakama-2013,fill-matterer-2014,vallee-clement-fill-flajolet-2009}. The classical key-comparison model may therefore be viewed as a special case within a broader probabilistic framework for recursive selection algorithms.

Within the unit-cost model, these works address several rather different aspects of the key-comparison complexity. Grübel and Rösler \cite{gruebel-roesler-1996} develop an asymptotic distribution theory for normalized costs across rank regimes, while Grübel \cite{gruebel1998markov} gives a complementary structural treatment from a Markov-chain perspective. Hwang and Tsai \cite{hwang-tsai-2002} analyze a different limiting regime, involving small target ranks and the appearance of the Dickman function. Devroye \cite{devroye2001find}, in turn, studies the cost in the worst case over the choice of the target rank, which is the perspective relevant for the present paper.

For this worst-case cost over the target rank, the natural starting point is the random binary-search-tree representation due to Devroye~\cite{devroye2001find}. In this framework, if $\Tn$ denotes the worst-case number of key comparisons used by \textsc{Quickselect} on $n$ distinct keys, then
\[
\frac{\Tn}{n} \to \Svar
\qquad\text{almost surely},
\]
where the limit random variable $\Svar$ is characterized by the fixed-point equation
\[
\Svar \eqd 1+\max\{\U \Svar',(1-\U)\Svar''\},
\qquad \U\sim\Unif(0,1),
\]
with $\Svar'$ and $\Svar''$ independent copies of $\Svar$, independent of $\U$.
Grübel--Rösler~\cite{gruebel-roesler-1996} identify the limit law through this fixed-point equation and establish uniqueness of the proper solution. In Devroye's formulation~\cite{devroye2001find}, the same limit variable is shown to be supported on $[2,\infty)$, to admit a bounded Lipschitz density, and to have finite moments of all orders.

Devroye~\cite{devroye2001find} also obtained explicit, though rather coarse, quantitative bounds on the law of $\Svar$. In particular,
\[
2+2\log 2 \le \EE{\Svar} \le 1+\frac{5}{\sqrt{2\pi}}+\frac{12e}{5} < 9.5185,
\]
and his argument yields an explicit exponential upper bound on the tail. In terms of the rate function, however, this gives only a linear lower bound on $-\log \PP{\Svar>t}$. More precisely,
\[
\PP{\Svar\ge t}
\le
\frac{\EE{\Svar-1}\,(t-1)e^{-(t-4)/3}}{9},
\qquad t\ge 4,
\]
and hence
\[
-\log \PP{\Svar>t}\ge \frac{t}{3}-\bigO(\log t).
\]
More generally, Devroye proved that the tail is superexponentially small, but without identifying the first-order logarithmic growth of its rate. Related exponential-type tail bounds for broader classes of max-recursive equations were later obtained by R\"uschendorf and Schopp~\cite{ruschendorf-schopp-2006}, but these general results likewise do not determine the sharp first-order asymptotic form in the present Quickselect setting.

The main goal of the present paper is to identify this first-order growth. To avoid repeatedly switching between statements about the tail probability itself and its logarithm, we introduce the rate function
\[
I(t):=-\log \PP{\Svar>t},
\qquad t>0.
\]
Our first result gives explicit one-sided bounds on $I(t)$.

\begin{proposition}[Explicit bounds for the rate function]
\label{prop:intro-rate-bounds}
For every $t>2$,
\[
I(t)\ge t\log t-(1+\log 2)t.
\]
Moreover, as $t\to\infty$,
\[
I(t)\le t\log t+t\log\log t-t\log 2+\smallo(t).
\]
\end{proposition}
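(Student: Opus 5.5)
The two inequalities in \cref{prop:intro-rate-bounds} are proved by separate arguments. The lower bound on $I$ is an upper bound on the tail, obtained by a Chernoff bound applied to a moment-generating-function majorant. The upper bound on $I$ is a lower bound on the tail, obtained from the binary-search-tree embedding and a one-level second-moment argument.

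\emph{Lower bound on $I(t)$.} Write $\varphi(\lambda)=\EE{e^{\lambda\Svar}}$. From the fixed-point equation and $e^{\max\{a,b\}}\le e^a+e^b$,
\[
\varphi(\lambda)\le 2e^{\lambda}\int_0^1\varphi(\lambda u)\,\dd u=\frac{2e^{\lambda}}{\lambda}\int_0^{\lambda}\varphi(v)\,\dd v .
\]
Using the fixed-point equation directly would be circular, since we do not yet know $\varphi<\infty$. I would therefore work with finite-depth truncations $\Svar_k$ coming from the tree embedding. Each $\Svar_k$ is bounded by $k+1$, so its MGF is finite. The truncations satisfy $\Svar_k\uparrow\Svar$, and $\varphi_{k+1}\le 2e^\lambda\int_0^1\varphi_k(\lambda u)\,\dd u$.

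Next I would exhibit an explicit supersolution $\psi$ of the inequality $\psi(\lambda)\ge 2e^\lambda\int_0^1\psi(\lambda u)\,\dd u$ with $\psi\ge\varphi_0$. Induction then gives $\varphi_k\le\psi$ for all $k$, and monotone convergence gives $\varphi\le\psi$. The target is $\log\psi(\lambda)\le 2e^{\lambda}$ up to terms that cost nothing after optimisation. With this, Chernoff gives
\[
\PP{\Svar>t}\le \exp\bigl(-\lambda t+2e^{\lambda}\bigr),
\]
and the choice $\lambda=\log(t/2)$, which is admissible because $t>2$, yields exactly $I(t)\ge t\log t-(1+\log 2)t$.

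I expect the main obstacle to be constructing $\psi$. The naive candidate $\exp(2e^\lambda)$ fails the supersolution inequality for small $\lambda$, where $\int_0^\lambda\exp(2e^v)\,\dd v\approx \lambda e^2$ exceeds the required $\lambda e^2/2$. So I would first try a two-regime majorant: use the support bound $\Svar\ge 2$ together with a crude bound for $\lambda$ near $0$, and $\exp(2e^\lambda-c(\lambda))$ with a slowly varying correction $c$ for large $\lambda$. I would then check that the correction is harmless at $\lambda=\log(t/2)$, in the spirit of the Alsmeyer--Dyszewski comparison.

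\emph{Upper bound on $I(t)$.} In the tree embedding, $\Svar$ is the supremum over infinite paths of $1+\sum_{j\ge1}\prod_{i\le j}V_i$, where each $V_i$ is $U$ or $1-U$ at the corresponding node. Fix a depth $k$ and a path, and write $V_i=1-x_i$. To first order the partial sum is
\[
\sum_{j\le k}\prod_{i\le j}(1-x_i)\approx k-\sum_{i}(k-i+1)x_i ,
\]
so it suffices to impose $\sum_i (k-i+1)x_i\le s$ with $s=k-t$ (up to lower-order corrections). The probability of this linear simplex event for one path is $s^k/(k!)^2$. At each node either child may be the large one, which multiplies by $2^k$; summing over the $2^k$ paths gives the first moment $\EE{N}\approx 2^k s^k/(k!)^2$ of the number $N$ of good paths.

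For the second moment, two paths that split at depth $m$ share their first $m$ factors. Since the constraint is a weighted simplex, the conditional probability of the continuation factorises, and I expect $\EE{N^2}\le C\,\EE{N}^2+\EE{N}$. Paley--Zygmund then gives $\PP{N>0}\ge c\min\{1,\EE{N}\}$.

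The remaining step is to optimise over $k=t+s$ in
\[
\log\EE{N}\approx k\log(2s)-2k\log k+2k ,
\]
with $s$ of order $t/\log t$. Doing this carefully, and keeping track of the nonlinear error in $\prod_i(1-x_i)$, should produce $-t\log t-t\log\log t+t\log 2+\smallo(t)$. Obtaining exactly this constant is the delicate bookkeeping point. If the simplex computation falls short, the fallback is the cruder box event $x_i\le\varepsilon$ with $\varepsilon\approx 1/(t\log t)$, which still gives the correct first two orders.
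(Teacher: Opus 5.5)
\paragraph{The gap: the lower bound on $I(t)$.} The gap sits exactly at the step you flag, and it is structural. It is not a matter of choosing the correction $c(\lambda)$ well.

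Write $(T\psi)(\lambda)=2e^{\lambda}\int_0^1\psi(\lambda u)\,\dd u$. Then $T\psi\ge 2\inf_{(0,\Lambda]}\psi$ on $(0,\Lambda]$. Suppose $\psi\ge T\psi$ and $\psi\ge\varphi_0\ge 1$ on $(0,\Lambda]$. Then $m=\inf_{(0,\Lambda]}\psi\in[1,\infty)$ would satisfy $m\ge 2m$, which is impossible. So no finite supersolution exists on any interval starting at $0$. The factor $2$ lost in $e^{\max\{a,b\}}\le e^a+e^b$ is never recovered, because the linear comparison forgets that $\varphi_k(0)=1$.

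Your two-regime patch would therefore need an upper bound on $\sup_k\varphi_k$ near $0$ from some independent source. The proposal does not supply one: the support bound $\Svar\ge 2$ bounds $\varphi$ from below, not from above.

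Nor is the failure confined to tiny $\lambda$. Numerically, $2e\int_0^1 e^{2e^v}\,\dd v\approx 276>e^{2e}\approx 230$, so $e^{2e^\lambda}$ already violates your inequality at $\lambda=1$. Yet $\lambda=\log(t/2)\ge 1$ is precisely the range where the claimed bound is non-trivial, since $t\log t-(1+\log 2)t>0$ if and only if $t>2e$. The statement is explicit for every $t>2$, so there is no slack for a majorant that exceeds $e^{2e^\lambda}$ anywhere in $\lambda\ge 1$.

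\paragraph{The missing idea.} Compare at a single fixed $\lambda$, not as functions of $\lambda$.
\begin{itemize}
\item By H\"older (Jensen for $x\mapsto x^u$), $\varphi_k(\lambda u)\le\varphi_k(\lambda)^u$. This log-convexity is where $\varphi_k(0)=1$ re-enters.
\item Substituting it into your recursion gives the scalar inequality $\varphi_{k+1}(\lambda)\le G_\lambda(\varphi_k(\lambda))$, with $G_\lambda(x)=2e^\lambda(x-1)/\log x$ increasing on $[1,\infty)$.
\item The value $x_\lambda=e^{2e^\lambda}$ is an exact barrier: $G_\lambda(x_\lambda)=x_\lambda-1$, and $\varphi_0(\lambda)=e^\lambda\le x_\lambda$.
\item Induction and monotone convergence give $\varphi(\lambda)\le e^{2e^\lambda}$ for every $\lambda>0$, with no correction term. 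Your Chernoff step then finishes the proof.
\end{itemize}
Your linear inequality is still useful afterwards. Once $\varphi<\infty$ is known, it says $\Phi'\le(2e^\lambda/\lambda)\Phi$ for $\Phi(\lambda)=\int_0^\lambda\varphi$. This propagates bounds \emph{forward} from any $\lambda_0>0$ and yields $\log\varphi(\lambda)\le 2\int_1^\lambda e^v v^{-1}\,\dd v+\bigO(\lambda)$. What it cannot do is start at $\lambda=0$.

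\paragraph{The upper bound on $I(t)$.} This half is sound, and it is a sharper variant of the paper's argument. The paper runs the same one-level second moment with an LCA decomposition, but for the cruder event $W_j\ge t/(j+1)$, obtained from $\sum_{q\le j}W_q\ge(j+1)W_j$.

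Your weighted simplex needs no nonlinear bookkeeping, because $\prod_{i\le j}(1-x_i)\ge 1-\sum_{i\le j}x_i$ makes it an exact sufficient condition. With $k=t+s$ and $s\sim t/\log t$, its first moment is $\exp\bigl(-t\log t-t\log\log t+(1+\log 2)t+\smallo(t)\bigr)$. That beats what is required by a factor $e^{t}$. Together with the forward bound above, it would even pin down $I(t)$ up to $\smallo(t)$.

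Three corrections:
\begin{itemize}
\item Below the split vertex the two continuations are coupled through $\xi$ versus $1-\xi$, so they do not factorise. However, one event is increasing and the other decreasing in $\xi$, so the product is an upper bound by Chebyshev's association inequality.
\item Since $\EE{N}\to 0$, the LCA sum is $\bigO(\EE{N}\,s/k)$. What you get, and what you need, is $\EE{N^2}\le(1+\smallo(1))\EE{N}$, not $C\,\EE{N}^2+\EE{N}$.
\item The box fallback only yields the constant $2\log 2-1<\log 2$, so it would not prove the statement.
\end{itemize}
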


The lower bound comes from a pointwise Chernoff estimate obtained through a comparison principle for the moment generating function.
This part of the argument is inspired by the work of Alsmeyer and Dyszewski on the nonhomogeneous smoothing transform~\cite{alsmeyer-dyszewski-2017}.
The upper bound is obtained by a one-level second-moment method on the binary-search-tree side: we count vertices on a carefully chosen level whose multiplicative weights already force the path sum to exceed the threshold.

Combining the two sides gives our main asymptotic theorem.

\begin{theorem}[Main tail asymptotic]
\label{thm:intro-main-tail}
As $t\to\infty$,
\[
I(t)=t\log t+\bigO\!\bigl(t\log\log t\bigr).
\]
Equivalently,
\[
-\log \PP{\Svar>t}=t\log t+\bigO\!\bigl(t\log\log t\bigr).
\]
\end{theorem}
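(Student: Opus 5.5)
The theorem follows from \cref{prop:intro-rate-bounds} by squeezing $I(t)-t\log t$ between two quantities of order $t\log\log t$. No new probabilistic input is needed; everything reduces to comparing the error terms in the two one-sided bounds. I will treat the two directions separately and then combine them.

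For the lower side, \cref{prop:intro-rate-bounds} gives $I(t)-t\log t\ge -(1+\log 2)t$ for every $t>2$. Since $\log\log t\to\infty$, for $t\ge e^e$ we have $\log\log t\ge 1$. Hence
\[
-(1+\log 2)t\ \ge\ -(1+\log 2)\,t\log\log t ,
\]
so $I(t)-t\log t\ge -C\,t\log\log t$ with $C=1+\log 2$ for all large $t$.

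For the upper side, the proposition gives
\[
I(t)-t\log t\le t\log\log t-t\log 2+\smallo(t),\qquad t\to\infty .
\]
The terms $-t\log 2$ and $\smallo(t)$ are $\bigO(t)$, and $t=\smallo(t\log\log t)$. So for $t$ large enough the right-hand side is at most $2t\log\log t$.

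Combining the two directions, $|I(t)-t\log t|\le \max\{C,2\}\,t\log\log t$ for all sufficiently large $t$, which is exactly $I(t)=t\log t+\bigO(t\log\log t)$. The second displayed form is the same statement, by the definition $I(t)=-\log\PP{\Svar>t}$.

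The real obstacle sits in \cref{prop:intro-rate-bounds} itself, specifically its upper bound on $I(t)$. That bound requires the one-level second-moment argument on the binary search tree to produce a lower bound $\PP{\Svar>t}\ge e^{-t\log t-t\log\log t+\bigO(t)}$. One would pick a level $k\approx t$ and require the products of uniform weights along a path to stay large enough that the path sum exceeds $t$. Then one controls the correlation between paths sharing a common ancestor, so that Paley--Zygmund applies. Since we may assume the proposition, the theorem itself is a routine consequence.
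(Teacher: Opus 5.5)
Your proposal is correct and follows the paper's route. The paper also obtains the theorem by combining the two one-sided bounds of \cref{prop:intro-rate-bounds}, absorbing the lower bound's $-(1+\log 2)t$ and the upper bound's $t\log\log t - t\log 2 + \smallo(t)$ into $\bigO(t\log\log t)$; your explicit constants ($t\ge e^e$ and $\max\{1+\log 2,2\}$) simply make that absorption precise.
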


Our second aim is to develop a practical framework for improving upper bounds on the mean $\EE{\Svar}$.
Rather than working directly with the random fixed-point equation, we pass to the distribution function
\[
F(x):=\PP{\Svar\le x}.
\]
The fixed-point identity for $\Svar$ then becomes a nonlinear integral equation for $F$, which naturally suggests an iterative lower scheme for the distribution function.
When combined with the pointwise tail majorant underlying \Cref{prop:intro-rate-bounds}, this leads to explicit upper bounds on $\EE{\Svar}$ through the tail-integral representation
\[
\EE{\Svar}=2+\int_2^\infty (1-F(x))\,\dd x.
\]

More precisely, we construct an order-preserving lower iteration for $F$ and a conservative mesh discretization of the associated nonlinear operator.
This produces a framework for computer-assisted upper bounds on $\EE{\Svar}$.
In the present paper we include only a floating-point pilot computation, intended as an illustration of the method rather than a certified numerical proof.

The paper is organized as follows.
\Cref{sec:tree} recalls the binary-search-tree embedding of \textsc{Quickselect}, introduces the limit functional, and derives the fixed-point equation for $\Svar$.
\Cref{sec:upper} proves the upper bound in \Cref{prop:intro-rate-bounds} by a one-level second-moment argument.
\Cref{sec:lower} develops the moment-generating-function comparison, derives the pointwise Chernoff bound, and proves the lower bound in \Cref{prop:intro-rate-bounds}.
\Cref{sec:mean} turns to the mean, develops the distribution-function iteration, and describes the conservative discretization scheme together with the floating-point pilot computation.
For convenience, some standard auxiliary material is collected in \Cref{app:aux}.

\section{Tree embedding and worst--case cost}
\label{sec:tree}

In this section we recall the binary-search-tree representation of the worst--case cost of
\textsc{Quickselect}, following Devroye~\cite{devroye2001find}, and use it to identify the
limiting fixed-point variable.

\subsection{Quickselect and the recursion for comparisons}

Given \(n\) distinct keys and a target rank \(k \in \{1,\dots,n\}\), \textsc{Quickselect} chooses a
pivot uniformly at random, compares each of the remaining \(n-1\) keys with it, and partitions
the input into the two subarrays consisting of the keys smaller and larger than the pivot.
If the pivot has rank \(k\), the algorithm stops. If its rank is \(J>k\), the algorithm recurses on
the left subarray of size \(J-1\); if \(J<k\), it recurses on the right subarray of size \(n-J\), now
seeking the \((k-J)\)-th smallest element there.

Let \(T_{n,k}\) denote the number of key comparisons used to select the \(k\)-th smallest element
among \(n\) distinct keys. Conditioning on the pivot rank \(J\), which is uniform on
\(\{1,\dots,n\}\), yields the distributional recursion
\[
T_{n,k}
\eqd
n-1+
\begin{cases}
T_{J-1,k}, & k<J,\\
T_{n-J,k-J}, & k>J,\\
0, & k=J.
\end{cases}
\]
The worst--case cost over all target ranks is then
\[
T_n:=\max_{1\le k\le n} T_{n,k},
\]
that is, the largest number of comparisons taken over all possible target ranks.

\subsection{Embedding into an infinite binary tree}
\label{subsec:bst-embedding}

A convenient way to analyze \(T_n\) is to realize the recursion on a single infinite probability
space; compare Devroye~\cite{devroye2001find}.
Since \textsc{Quickselect} depends only on the relative order of the input keys, we may assume
without loss of generality that the input is \(\{1,\dots,n\}\).

We work on the infinite full binary tree \(\mathcal T\), and attach to each vertex \(v\in\mathcal T\)
an independent random label
\[
\xi_v\sim\Unif(0,1).
\]
If the current subproblem size at \(v\) is \(m\ge 1\), we define the pivot rank by
\[
J_{m,v}:=1+\lfloor m\xi_v\rfloor.
\]
Then \(J_{m,v}\) is uniform on \(\{1,\dots,m\}\), and the left and right child subproblem sizes are
\[
J_{m,v}-1=\lfloor m\xi_v\rfloor,
\qquad
m-J_{m,v}=\lfloor m(1-\xi_v)\rfloor
\qquad\text{a.s.}
\]
Once a subproblem size becomes \(0\), all of its descendants also have size \(0\).
Thus the worst-case cost may be represented as
\[
T_n=\sup_p \sum_{r\ge 0}(N_r(p)-1)_+,
\]
where the supremum is taken over all infinite rooted paths \(p\) in \(\mathcal T\), and
\(N_r(p)\) denotes the subproblem size at depth \(r\) along \(p\), with \(N_0(p)=n\).

Now fix an infinite rooted path \(p=(v_0,v_1,v_2,\dots)\) in \(\mathcal T\), where \(v_0\) is the root.
For each \(r\ge 1\), define the path factor \(U_r(p)\) by
\[
U_r(p)=
\begin{cases}
\xi_{v_{r-1}}, & \text{if } p \text{ goes from } v_{r-1} \text{ to its left child},\\[1mm]
1-\xi_{v_{r-1}}, & \text{if } p \text{ goes from } v_{r-1} \text{ to its right child}.
\end{cases}
\]
We also define the cumulative path weights
\[
W_r(p):=\prod_{i=1}^r U_i(p),
\qquad W_0(p):=1.
\]
Then, for each fixed path \(p\), the random variables \(U_1(p),U_2(p),\dots\) are i.i.d.
\(\Unif(0,1)\), and the selected subproblem sizes satisfy
\[
N_{r+1}(p)=\lfloor U_{r+1}(p)\,N_r(p)\rfloor,
\qquad r\ge 0.
\]
This immediately yields multiplicative upper and lower bounds.

\begin{figure}[t]
    \centering
    \includegraphics[width=1.0\linewidth]{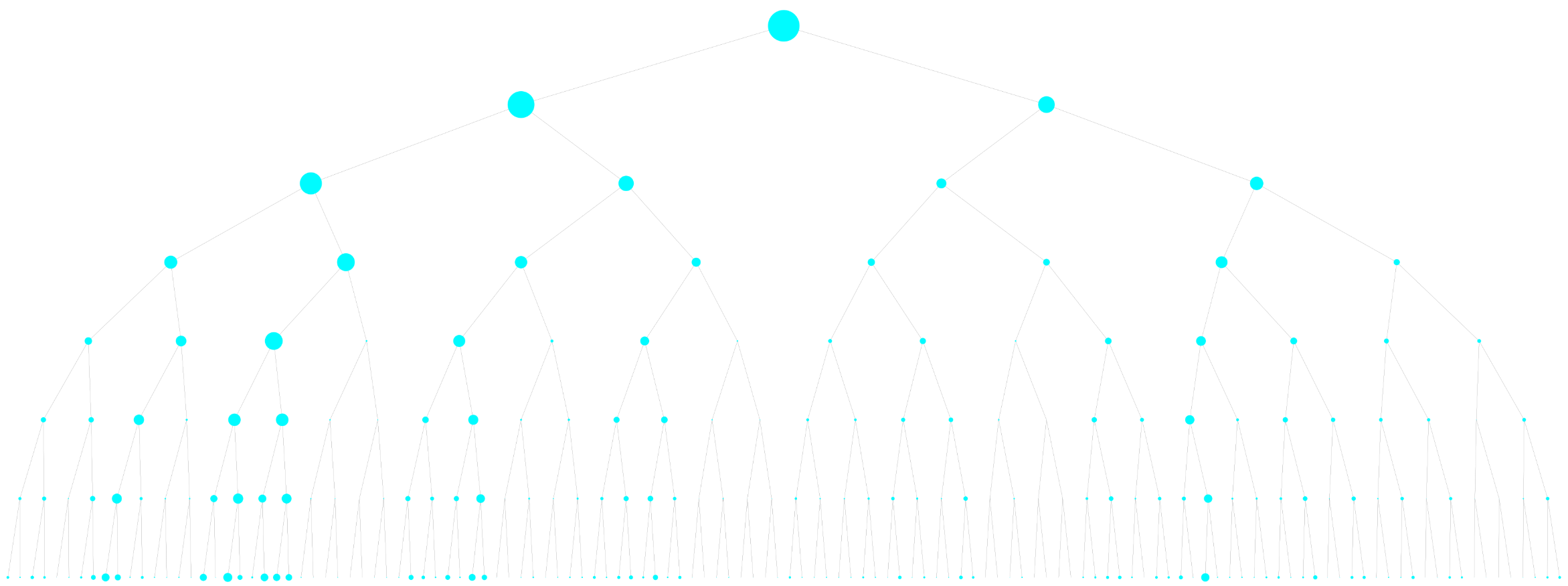}
    \caption{A random realization of the labelled binary tree from \Cref{subsec:bst-embedding}, shown up to depth \(7\).
    For each vertex \(v\) at depth \(r\), the area of the disk is proportional to \(W_r(p)\) for any infinite rooted path \(p\) satisfying \(p[r]=v\).
    Since \(W_r(p)\) depends only on the first \(r\) steps, this quantity is independent of the choice of the continuation of \(p\).}
\end{figure}

\begin{lemma}[Multiplicative floor bounds]
\label{lem:mult-floor-bounds}
Fix an infinite rooted path \(p\).
Then, for every \(r\ge 0\),
\[
n\,W_r(p)-r
\;\le\;
N_r(p)
\;\le\;
n\,W_r(p).
\]
\end{lemma}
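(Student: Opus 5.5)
The plan is induction on \(r\), using only the recursion \(N_{r+1}(p)=\lfloor U_{r+1}(p)N_r(p)\rfloor\), the elementary inequalities \(x-1<\lfloor x\rfloor\le x\), and the fact that all factors \(U_i(p)\) lie in \([0,1]\). The base case \(r=0\) is immediate: \(N_0(p)=n=nW_0(p)\), so both inequalities hold (the lower one with slack \(0\)).

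For the inductive step I assume \(nW_r(p)-r\le N_r(p)\le nW_r(p)\). For the upper bound, \(N_{r+1}(p)=\lfloor U_{r+1}N_r\rfloor\le U_{r+1}N_r\le U_{r+1}\,nW_r(p)=nW_{r+1}(p)\), using \(U_{r+1}\ge 0\). For the lower bound,
\[
N_{r+1}(p)\ge U_{r+1}N_r(p)-1\ge U_{r+1}\bigl(nW_r(p)-r\bigr)-1=nW_{r+1}(p)-rU_{r+1}-1\ge nW_{r+1}(p)-(r+1),
\]
where the last step uses \(U_{r+1}\le 1\) and \(r\ge0\). The middle step multiplies the inductive lower bound by \(U_{r+1}\ge0\), which is legitimate even when \(nW_r(p)-r\) is negative.

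There is one subtlety to check. The recursion \(N_{r+1}=\lfloor U_{r+1}N_r\rfloor\) must hold also after the size has dropped to \(0\), where the algorithmic convention is that descendants have size \(0\). This is consistent, since \(\lfloor U\cdot 0\rfloor=0\), so the recursion stated in the paper covers all \(r\ge0\) almost surely. Beyond this the argument is routine, and I do not expect any genuine obstacle. The only point needing care is that the per-step floor losses accumulate additively (at most \(1\) per level) rather than being amplified, and this is exactly what \(U_{r+1}\le1\) guarantees.
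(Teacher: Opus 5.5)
Your proof is correct and is essentially the paper's argument: induction on \(r\), using \(\lfloor x\rfloor\le x\) for the upper bound and \(\lfloor x\rfloor\ge x-1\) together with \(0\le U_{r+1}(p)\le 1\) for the lower bound. Your remark that the recursion stays consistent after the size reaches \(0\) is a harmless extra check that the paper leaves implicit.
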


\begin{proof}
By construction,
\[
N_{r+1}(p)=\lfloor U_{r+1}(p)\,N_r(p)\rfloor,
\qquad r\ge 0.
\]
The upper bound follows immediately from
\[
N_{r+1}(p)\le U_{r+1}(p)\,N_r(p),
\]
and an induction on \(r\).

For the lower bound, we use
\[
N_{r+1}(p)=\lfloor U_{r+1}(p)\,N_r(p)\rfloor
\ge U_{r+1}(p)\,N_r(p)-1.
\]
We argue by induction on \(r\).
For \(r=0\), the claim is trivial since \(N_0(p)=n=n\,W_0(p)\).
Assume that
\[
N_r(p)\ge n\,W_r(p)-r.
\]
Then
\[
N_{r+1}(p)
\ge
U_{r+1}(p)\bigl(n\,W_r(p)-r\bigr)-1
=
n\,W_{r+1}(p)-r\,U_{r+1}(p)-1
\ge
n\,W_{r+1}(p)-(r+1),
\]
since \(U_{r+1}(p)\le 1\).
This proves the lower bound.
\end{proof}

Thus, along any fixed path, the subproblem sizes decrease, up to additive errors, roughly
multiplicatively with i.i.d. \(\Unif(0,1)\) factors.
The advantage of the present construction is that all paths now live on a single probability space,
which will allow us in the next subsection to define the limit functional by taking a supremum
over all infinite rooted paths simultaneously.

\subsection{Limiting functional and convergence of \(T_n/n\)}

Recall that
\[
T_n=\sup_p \sum_{r\ge 0} (N_r(p)-1)_+,
\]
where the supremum is taken over all infinite rooted paths \(p\) in \(\mathcal T\), and \(N_r(p)\)
denotes the subproblem size at depth \(r\) along \(p\).
This is equivalent to the usual maximum over the finite recursion tree, because once \(N_r(p)=0\),
all subsequent terms vanish.

For each path \(p\), let \(W_r(p)\) be the cumulative path weights introduced in
\Cref{subsec:bst-embedding}.
Then, by \Cref{lem:mult-floor-bounds}, for every \(r\ge 0\),
\[
n\,W_r(p)-r \le N_r(p) \le n\,W_r(p).
\]

\paragraph{Pathwise approximation.}
Summing these inequalities from \(r=0\) to \(m\) gives
\begin{equation}\label{eq:pathwise-sandwich}
n\sum_{r=0}^m W_r(p)-\sum_{r=0}^m r
\;\le\;
\sum_{r=0}^m N_r(p)
\;\le\;
n\sum_{r=0}^m W_r(p)
\;\le\;
n\sum_{r=0}^\infty W_r(p).
\end{equation}
Since
\[
\sum_{r=0}^m r=\frac{m(m+1)}{2},
\]
it follows that whenever \(m=m(n)\) satisfies
\[
\frac{m(n)^2}{n}\to 0
\qquad (n\to\infty),
\]
we have, for each fixed path \(p\),
\[
\frac1n \sum_{r=0}^{m(n)} N_r(p)
\longrightarrow
\sum_{r\ge 0} W_r(p)
\qquad\text{a.s.}
\]

\paragraph{Truncated functionals and the candidate limit.}
Because all paths are realized on the common probability space introduced in
\Cref{subsec:bst-embedding}, we may define, for \(m\ge 0\),
\[
S^{(m)}:=\sup_p \sum_{r=0}^m W_r(p),
\]
and
\begin{equation}\label{eq:def-S}
S:=\sup_p \sum_{r=0}^\infty W_r(p),
\end{equation}
where the suprema are taken over all infinite rooted paths \(p\) in \(\mathcal T\).

For each fixed path \(p\), the partial sums \(\sum_{r=0}^m W_r(p)\) increase to
\(\sum_{r=0}^\infty W_r(p)\). Hence
\[
S
=
\sup_p \sum_{r=0}^\infty W_r(p)
=
\sup_p \sup_{m\ge 0} \sum_{r=0}^m W_r(p)
=
\sup_{m\ge 0} S^{(m)},
\]
so that
\[
S^{(m)} \uparrow S
\qquad\text{a.s.}
\]

\paragraph{From pathwise bounds to \(T_n\).}
Because \((N_r(p)-1)_+ \ge N_r(p)-1\), for every \(m\ge 0\) and every path \(p\),
\[
\sum_{r=0}^m (N_r(p)-1)_+
\;\ge\;
\sum_{r=0}^m N_r(p)-(m+1).
\]
Taking the supremum over all paths and using \Cref{eq:pathwise-sandwich}, we obtain
\[
T_n
\ge
\sup_p \Bigl(\sum_{r=0}^m N_r(p)-(m+1)\Bigr)
\ge
\sup_p \Bigl(n\sum_{r=0}^m W_r(p)-\frac{m(m+1)}2-(m+1)\Bigr),
\]
hence
\begin{equation}\label{eq:Tn-lower-bound}
T_n
\;\ge\;
n S^{(m)}-\frac{(m+1)(m+2)}2.
\end{equation}
On the other hand, since \((N_r(p)-1)_+ \le N_r(p)\) and \(N_r(p)\le nW_r(p)\),
\begin{equation}\label{eq:Tn-upper-bound}
T_n
\;\le\;
\sup_p \sum_{r\ge 0} N_r(p)
\;\le\;
n S.
\end{equation}

Dividing \Cref{eq:Tn-lower-bound,eq:Tn-upper-bound} by \(n\), and choosing \(m=m(n)\) so that
\(m(n)\to\infty\) and \(m(n)^2/n\to 0\), we obtain
\[
S^{(m(n))}-o(1)\le \frac{T_n}{n}\le S.
\]
Since \(S^{(m)}\uparrow S\) almost surely, it follows that \(S^{(m(n))}\to S\) almost surely.
Therefore
\[
\frac{T_n}{n}\to S
\qquad\text{almost surely}.
\]

\paragraph{Consistency with the fixed point.}
Starting from \Cref{eq:def-S}, let \(\varnothing\) denote the root of \(\mathcal T\), and set
\[
U:=\xi_{\varnothing}\sim \Unif(0,1).
\]
Every infinite rooted path in \(\mathcal T\) begins by moving either to the left or to the right
child of the root.
If a path enters the left subtree, then all subsequent cumulative weights are multiplied by \(U\);
if it enters the right subtree, they are multiplied by \(1-U\).
Therefore
\[
S
=
1+\max\{U S^{(L)},(1-U)S^{(R)}\}
\qquad\text{a.s.},
\]
where \(S^{(L)}\) and \(S^{(R)}\) are the analogues of \(S\) constructed from the left and right
subtrees, respectively.

Since the vertex labels on the two subtrees are independent, and each subtree has the same law
as the whole infinite full binary tree, the random variables \(S^{(L)}\) and \(S^{(R)}\) are
independent copies of \(S\), and they are independent of \(U\).
Consequently,
\[
S \eqd 1+\max\{U S', (1-U) S''\},
\]
where \(S',S'' \eqd S\) are independent and independent of \(U\).
This is precisely the fixed-point equation announced above.

\subsection{Finiteness and uniqueness of the fixed point}

We have now defined \(S\) by \Cref{eq:def-S}, shown that
\[
\frac{T_n}{n}\to S
\qquad\text{almost surely},
\]
and verified that \(S\) satisfies the distributional fixed-point equation
\[
S \eqd 1+\max\{US',(1-U)S''\}.
\]
It remains to recall that \(S\) is indeed almost surely finite and that this fixed-point equation
has a unique proper solution.

These facts are known from the literature.
Devroye~\cite{devroye2001find}, building on work of Grübel and Rösler~\cite{gruebel-roesler-1996},
proved that \(S\) is a proper random variable: \(S<\infty\) almost surely, \(S\) has a bounded
Lipschitz density, and
\[
\supp(S)\subseteq [2,\infty).
\]
In fact, \(S\) has finite moments of all orders, and Devroye also obtained explicit exponential and
superexponential upper bounds on the tail.

Furthermore, Grübel and Rösler~\cite{gruebel-roesler-1996} proved that among all nonnegative
random variables, the distributional identity
\[
X \eqd 1+\max\{UX',(1-U)X''\}
\]
admits exactly one proper solution.
Consequently, the random variable \(S\) constructed above is the unique proper solution of the fixed-point equation.

\section{Upper bound: level counts and the second-moment method}
\label{sec:upper}

In this section we prove the upper bound in \Cref{prop:intro-rate-bounds}.

\begin{theorem}[Upper tail via a one-level second-moment method]
\label{thm:upper-goal}
As \(t\to\infty\),
\[
-\log \PP{\Svar>t}
\le
t\log t+t\log\log t-t\log 2+\smallo(t).
\]
\end{theorem}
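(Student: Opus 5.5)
We need a lower bound on $\PP{S>t}$. We use the representation \Cref{eq:def-S}: $S\ge \sum_{r=0}^{k} W_r(p)$ for every path $p$ and every level $k$. The idea is to fix a level $k\approx t$ and count the vertices $v$ at depth $k$ whose root-to-$v$ path has every factor $U_i\ge 1-a_i$, for a deterministic, slowly shrinking sequence of thresholds. Such a path forces $\sum_{r\le k}W_r>t$. Concretely, we take a target level $k=\lceil t\rceil+\ell$ with a small overshoot $\ell$. We then require $U_i\ge 1-\varepsilon_i$ with $\varepsilon_i$ chosen so that $\prod_{i\le r}(1-\varepsilon_i)\ge 1-\sum_{i\le r}\varepsilon_i$ stays close to $1$. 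Then
\[
\sum_{r=0}^k W_r\ge k+1-\sum_{r\le k}\sum_{i\le r}\varepsilon_i>t .
\]
The simplest choice is a constant $\varepsilon_i=\varepsilon=c/(t\log t)$ or similar. The cleaner option, suggested by the macro $\log\frac{j+1}{t}$, is to let the thresholds depend on the remaining distance, i.e.\ $a_j(t)$ related to $\log\frac{j+1}{t}$. We tune these to balance the first-moment cost against the deficit.

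Let $Z$ be the number of depth-$k$ vertices whose path satisfies all the constraints. By the path structure, each path's factors are i.i.d.\ uniform, and the two children of a vertex use $\xi_v$ and $1-\xi_v$. Hence at each vertex at most one child can satisfy $U\ge 1-\varepsilon$ when $\varepsilon<1/2$, so the good set is a random path-like tree, and
\[
\EE{Z}=2^k\prod_{i=1}^k\varepsilon_i .
\]
With the product arranged to be about $(c/t\cdot\text{polylog})^{t}$, we get
\[
\log \EE{Z}\approx t\log 2-t\log t+\ldots .
\]
The $-t\log 2$ in the bound comes from the $2^k$ path count. The $t\log\log t$ term comes from the factor by which $\varepsilon$ must be smaller than $1/t$, namely $\varepsilon\approx 1/(t\log t)$, so that the accumulated deficit $\sum_r r\varepsilon\approx k^2\varepsilon/2$ is absorbed by the overshoot $\ell=o(t)$. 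We will optimise $\varepsilon$ and $\ell$ jointly: $\ell\approx t/\log t$ costs $\ell\log t=o(t)\cdot$, which must be checked, and the choice of $\varepsilon$ yields $t\log(t\log t)$ up to $o(t)$.

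Then we apply Paley--Zygmund: $\PP{Z>0}\ge \EE{Z}^2/\EE{Z^2}$. For the second moment we sum over pairs of depth-$k$ vertices by their branching depth $j$. Below the branch point, the pair shares factors. At the branch vertex the two continuations take $\xi$ and $1-\xi$, which both lie near $1$ only if $\varepsilon_{j+1}\ge 1/2$. That is impossible, so pairs with a branch point contribute zero. Hence $Z\in\{0,1\}$ essentially, and $\EE{Z^2}=\EE{Z}$, giving $\PP{S>t}\ge\PP{Z>0}=\EE{Z}$. In this regime the ``second-moment method'' is therefore trivial, which is exactly the one-level statement.

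The main obstacle is the bookkeeping in the optimisation: the constraint must force $\sum W_r>t$ while the probability stays $\exp(-t\log t-t\log\log t+t\log 2-o(t))$. If a constant $\varepsilon$ with overshoot is too lossy, I would use level-dependent thresholds decaying like $(k-r)/\!(\ldots)$. Near the root, small deviations cost the most in the sum because they propagate to all later terms, so the thresholds there should be tight, while near the leaf they can be loose. I would verify via $\sum_i \log\varepsilon_i$ with Stirling that the total cost is $t\log t+t\log\log t-t\log 2+o(t)$.
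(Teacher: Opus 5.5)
Your framework is sound and differs from the paper's. Because the two children of a vertex use $\xi_v$ and $1-\xi_v$, the events $\{U_i(v)\ge 1-\varepsilon_i \text{ for all } i\le k\}$ for distinct depth-$k$ vertices are disjoint once all $\varepsilon_i<1/2$. Hence $\PP{Z\ge 1}=\EE{Z}=2^k\prod_i\varepsilon_i$ exactly. Moreover, $\prod_{i\le r}(1-\varepsilon_i)\ge 1-\sum_{i\le r}\varepsilon_i$ gives $\sum_{r\le k}W_r\ge k+1-\sum_{i=1}^k(k-i+1)\varepsilon_i$. The paper instead uses the simplex event $\{S_j(v)\le\log\frac{j+1}{t}\}$, which is not disjoint across vertices and therefore needs the LCA second-moment estimate.

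\textbf{The gap.} It sits at exactly the order the theorem is about, the linear term $-t\log 2$, and the one concrete choice you make, a constant $\varepsilon$, does not reach it. With deficit budget $D=k+1-t$ you need $\varepsilon\le 2D/(k(k+1))$. Writing $k=t(1+c/\log t)$ and $D\sim ct/\log t$, you get
\[
\log\EE{Z}=-t\log t-t\log\log t+t\,(2\log 2+\log c-c)+\smallo(t).
\]
Its maximum at $c=1$ only yields $-\log\PP{\Svar>t}\le t\log t+t\log\log t-(2\log 2-1)t+\smallo(t)$, which is weaker than the claim by $(1-\log 2)t$.

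Relatedly, your assertion that the overshoot costs $\ell\log t=\smallo(t)$ is false. For $\ell\asymp t/\log t$ this cost is $\Theta(t)$ (it is the $-c$ above), so it must be balanced against the thresholds, not discarded. Your fallback, ``thresholds decaying like $(k-r)/(\ldots)$'', is left unspecified. As written it suggests thresholds that grow toward the root, which is the opposite of your own correct heuristic that they should be tight near the root. The $\Theta(t)$ bookkeeping you defer to ``I would verify via Stirling'' is the whole content of the result.

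\textbf{How to close it.} Optimize the box. Maximizing $\sum_i\log\varepsilon_i$ subject to $\sum_i(k-i+1)\varepsilon_i=D$ gives
\[
\varepsilon_i=\frac{D}{k(k-i+1)},\qquad \prod_i\varepsilon_i=\frac{D^k}{k^k\,k!}.
\]
Compared with the constant choice, this gains the factor $(k+1)^k/(2^k k!)=e^{(1-\log 2)k+\bigO(\log k)}$, which is exactly the missing amount. Now
\[
\log\EE{Z}=k\log 2+k\log(D/k)-\log k!=-t\log t-t\log\log t+t\,(\log 2+\log c+1-c)+\smallo(t),
\]
and at $c=1$ this is precisely the paper's bound. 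The largest threshold is $\varepsilon_k=D/k\sim 1/\log t<1/2$, so disjointness still holds. To get the strict inequality $\Svar>t$, take $D$ slightly below $k+1-t$, say $D=k-t$ with $k=\lceil t(1+1/\log t)\rceil$.

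With this explicit choice your argument becomes a complete proof. It is also simpler than the paper's, since the second-moment lemma is no longer needed.
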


\noindent\emph{Idea of the proof.}
Fix a depth \(j\), and let \(v\) be a vertex at depth \(j\).
Choose any infinite rooted path \(p\) such that \(p[j]=v\), and define
\[
W(v)\defeq W_j(p).
\]
This is well defined, since \(W_j(p)\) depends only on the first \(j\) steps of \(p\), that is,
only on the unique path from the root to \(v\).

If \(W(v)\ge t/(j+1)\), then for any infinite rooted path \(p\) passing through \(v\),
\[
\Svar
\ge
\sum_{q=0}^{j} W_q(p)
\ge
(j+1)\,W(v)
\ge
t.
\]
We therefore count, at level \(j\), the number \(Z_j(t)\) of vertices satisfying this condition.
We estimate the first and second moments of \(Z_j(t)\), apply the second-moment method to
show that \(\PP{Z_j(t)\ge 1}\) is asymptotically comparable to \(\EE{Z_j(t)}\) in the relevant range
of \(j\), and then optimize over \(j\) to conclude \Cref{thm:upper-goal}.

\subsection{Level-wise setup and a sufficient condition}

Fix \(j\ge 0\), and let \(\mathcal L_j\) be the set of vertices at depth \(j\) in \(T\).
For \(v\in\mathcal L_j\), let \(U_1(v),\dots,U_j(v)\) denote the successive path factors along the
unique path from the root to \(v\).
Equivalently, if \(p\) is any infinite rooted path with \(p[j]=v\), then
\[
U_i(v)=U_i(p),\qquad 1\le i\le j,
\]
and this is well defined because only the first \(j\) steps of \(p\) matter.

For \(1\le i\le j\), define
\[
X_i(v)\defeq -\log U_i(v),
\]
and for \(0\le q\le j\), set
\[
S_q(v)\defeq \sum_{i=1}^q X_i(v),
\qquad S_0(v)\equiv 0.
\]
Then \(X_1(v),\dots,X_j(v)\) are i.i.d. \(\Exp(1)\), and hence, for \(j\ge 1\),
\[
S_j(v)\sim \Gam(j,1).
\]
Moreover,
\[
e^{-S_q(v)}=\prod_{i=1}^q U_i(v),
\qquad 0\le q\le j.
\]
Since \(S_q(v)\) is nondecreasing in \(q\),
\[
\sum_{q=0}^{j} e^{-S_q(v)}
\ge
(j+1)e^{-S_j(v)}.
\]

Fix \(t>0\), and for each integer \(j\ge 0\), set
\[
\aj \defeq \athr{j}.
\]
Hence, for any \(v\in\mathcal L_j\),
\[
S_j(v)\le \aj
\quad\Longrightarrow\quad
e^{-S_j(v)}\ge \frac{t}{j+1}
\quad\Longrightarrow\quad
\sum_{q=0}^{j} e^{-S_q(v)}\ge t.
\]

Define the level count
\[
Z_j(t)\defeq \sum_{v\in\mathcal L_j} \1\{S_j(v)\le \aj\}.
\]
If \(Z_j(t)\ge 1\), then there exists \(v\in\mathcal L_j\) with \(S_j(v)\le \aj\).
Choosing any infinite rooted path \(p\) with \(p[j]=v\), we obtain
\[
\Svar
\ge
\sum_{q=0}^{j} W_q(p)
=
\sum_{q=0}^{j} e^{-S_q(v)}
\ge
t.
\]
Thus we have the level-wise sufficient condition
\begin{equation}\label{eq:suff-cond}
\{Z_j(t)\ge 1\}\subseteq \{\Svar>t\}
\qquad\text{for every }j\ge 0.
\end{equation}

\subsection{Expectation on a level}

By linearity of expectation and the fact that every vertex \(v\in\mathcal L_j\) has the same
marginal law, we obtain
\begin{equation}\label{eq:Zj-mean}
  \mu_j(t)\defeq \EE{Z_j(t)}
  = \sum_{v\in\mathcal L_j} \PP{S_j(v)\le \aj}
  = 2^j\,\PP{\Gam(j,1)\le \aj},
\end{equation}
for \(j\ge 1\). (The case \(j=0\) is trivial.)

By \Cref{lem:gamma-small-ball}, for all \(j\ge 1\) such that \(\aj\in[0,1]\),
\begin{equation}\label{eq:Zj-mean-2sided}
  2^j e^{-\aj}\frac{\aj^{\,j}}{j!}
  \le
  \mu_j(t)
  \le
  2^j \frac{\aj^{\,j}}{j!}.
\end{equation}
In particular, as \(\aj\downarrow 0\),
\begin{equation}\label{eq:Zj-mean-asymp}
  \mu_j(t)
  =
  2^j \frac{\aj^{\,j}}{j!}\bigl(1+\bigO(\aj)\bigr),
\end{equation}
with the \(\bigO(\aj)\) term uniform in \(j\).

\subsection{Second moment via LCA decomposition}

Write
\[
I_v \defeq \1\{S_j(v)\le \aj\},
\]
so that
\[
Z_j(t)=\sum_{v\in\mathcal L_j} I_v
\]
and
\[
\EE{Z_j(t)^2}
=
\mu_j(t)+\sum_{v\neq w}\PP{I_v=I_w=1}.
\]

We group ordered pairs \((v,w)\) according to the depth \(\ell\in\{0,\dots,j-1\}\) of their
lowest common ancestor (LCA).
The number of such ordered pairs is
\begin{equation}\label{eq:pair-count}
N_{j,\ell}
=
2^\ell\cdot 2\cdot 2^{\,j-\ell-1}\cdot 2^{\,j-\ell-1}
=
2^{\,2j-\ell-1}.
\end{equation}
Indeed, there are \(2^\ell\) choices for the LCA vertex, a factor \(2\) for deciding which of
\((v,w)\) goes into the left or right branch below the LCA, and then \(2^{j-\ell-1}\) choices for
each descendant at depth \(j\).

Now fix an ordered pair \((v,w)\) whose LCA has depth \(\ell\).
Conditioning on the shared prefix sum \(S_\ell\sim\Gam(\ell,1)\) (with the convention
\(S_0\equiv 0\)), the two subpaths below the LCA are independent, and the additional
path-sums on the two branches both have law \(\Gam(j-\ell,1)\).
Hence
\begin{equation}\label{eq:pjl-def}
\PP{I_v=I_w=1}
=
\EE{\1_{\{S_\ell\le \aj\}}\,F_{j-\ell}(\aj-S_\ell)^2}
\defeq
p_{j,\ell},
\end{equation}
where
\[
F_m(x)\defeq \PP{\Gam(m,1)\le x},
\qquad x\ge 0,
\]
and \(F_0(x)=\1_{\{x\ge 0\}}\).

By symmetry, the quantity in \Cref{eq:pjl-def} depends only on \(j\) and \(\ell\), not on the
particular ordered pair \((v,w)\).
Therefore
\begin{equation}\label{eq:LCA-sum-exact}
\sum_{v\neq w}\PP{I_v=I_w=1}
=
\sum_{\ell=0}^{j-1} N_{j,\ell}\,p_{j,\ell}.
\end{equation}

\begin{remark}
We work with ordered pairs only for convenience; switching to unordered pairs would change
the formulas by a global factor \(2\) and would not affect the asymptotics.
The case \(\ell=0\) fits the same scheme: since \(S_0\equiv 0\), we have
\[
p_{j,0}=F_j(\aj)^2.
\]
\end{remark}

\subsection{Uniform upper bound for \texorpdfstring{$p_{j,\ell}$}{p\{j,l\}}}

Recall that for \(r\defeq j-\ell\ge 1\) we set
\[
p_{j,\ell}(a)\defeq \EE{\1_{\{S_\ell\le a\}}\,F_r(a-S_\ell)^2},
\qquad
S_\ell\sim\Gam(\ell,1),
\]
where
\[
F_m(x)\defeq \PP{\Gam(m,1)\le x},
\qquad x\ge 0.
\]

\begin{lemma}\label{lem:pjl-upper}
For all \(a>0\) and integers \(j\ge 1\), \(\ell\in\{0,\dots,j-1\}\) with \(r=j-\ell\ge 1\),
\begin{equation}\label{eq:pjl-upper}
p_{j,\ell}(a)\le \binom{2r}{r}\,\frac{a^{\,j+r}}{(j+r)!}.
\end{equation}
\end{lemma}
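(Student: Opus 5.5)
The plan is to bound $F_r$ by its polynomial majorant and then compute the remaining expectation exactly as a Beta integral. For $x\ge 0$ and $r\ge 1$, the Gamma density $y^{r-1}e^{-y}/(r-1)!$ is at most $y^{r-1}/(r-1)!$. Integrating gives
\[
F_r(x)\le \frac{x^r}{r!},\qquad x\ge 0.
\]
This is the same elementary bound used in \Cref{lem:gamma-small-ball}. Hence
\[
p_{j,\ell}(a)\le \frac{1}{(r!)^2}\,\EE{\1_{\{S_\ell\le a\}}(a-S_\ell)^{2r}}.
\]

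\emph{Case $\ell\ge 1$.} Bound the density of $S_\ell\sim\Gam(\ell,1)$ by $s^{\ell-1}/(\ell-1)!$, again dropping $e^{-s}\le 1$. This gives
\[
\EE{\1_{\{S_\ell\le a\}}(a-S_\ell)^{2r}}
\le \frac{1}{(\ell-1)!}\int_0^a s^{\ell-1}(a-s)^{2r}\,\dd s
= \frac{1}{(\ell-1)!}\,a^{\ell+2r}\,\frac{(\ell-1)!\,(2r)!}{(\ell+2r)!}.
\]
Since $\ell+2r=j+r$, this simplifies to $(2r)!\,a^{j+r}/(j+r)!$. Dividing by $(r!)^2$ yields exactly
\[
\binom{2r}{r}\frac{a^{j+r}}{(j+r)!}.
\]

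\emph{Case $\ell=0$.} Here $r=j$ and $S_0\equiv 0$, so $p_{j,0}=F_j(a)^2\le a^{2j}/(j!)^2$. Since $j+r=2j$, this equals $\binom{2j}{j}a^{2j}/(2j)!$, which is the claimed bound with equality.

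No step is a real obstacle. The only points needing care are to keep the $\ell=0$ case separate, because the Gamma density formula degenerates there, and to evaluate the Beta integral $\int_0^a s^{\ell-1}(a-s)^{2r}\,\dd s=a^{\ell+2r}B(\ell,2r+1)$ correctly. No restriction such as $a\le 1$ is needed, since both majorants hold for all $x,s\ge 0$.
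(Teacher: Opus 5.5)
Your proof is correct and follows the paper's argument essentially line for line. Both use the majorant $F_r(x)\le x^r/r!$, drop $e^{-s}$ in the $\Gam(\ell,1)$ density, evaluate $\int_0^a s^{\ell-1}(a-s)^{2r}\,\dd s=a^{\ell+2r}\mathrm{B}(\ell,2r+1)$, and handle $\ell=0$ separately. Your remark that $F_r(x)\le x^r/r!$ holds for all $x\ge 0$ is a useful precision: the paper cites \Cref{lem:gamma-small-ball} for all $x\ge 0$, but that lemma is stated only for $x\in[0,1]$.
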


\begin{proof}
First assume \(\ell\ge 1\).
Using the density
\[
f_\ell(s)=\frac{s^{\ell-1}e^{-s}}{\Gamma(\ell)}
\]
of \(\Gam(\ell,1)\), we write
\[
p_{j,\ell}(a)
=
\frac{1}{\Gamma(\ell)}
\int_0^a s^{\ell-1}e^{-s}\,F_r(a-s)^2\,\dd s.
\]
Since \(e^{-s}\le 1\) and, by \Cref{lem:gamma-small-ball},
\[
F_r(x)\le \frac{x^r}{r!},
\qquad x\ge 0,
\]
we obtain
\[
p_{j,\ell}(a)
\le
\frac{1}{\Gamma(\ell)(r!)^2}
\int_0^a s^{\ell-1}(a-s)^{2r}\,\dd s.
\]
By the beta-integral identity,
\[
\int_0^a s^{\ell-1}(a-s)^{2r}\,\dd s
=
a^{\ell+2r}\,\mathrm{B}(\ell,2r+1),
\]
and hence
\[
p_{j,\ell}(a)
\le
\frac{a^{\ell+2r}}{\Gamma(\ell)(r!)^2}\,\mathrm{B}(\ell,2r+1).
\]
Using
\[
\mathrm{B}(\ell,2r+1)
=
\frac{\Gamma(\ell)\Gamma(2r+1)}{\Gamma(\ell+2r+1)}
\]
and \(\Gamma(2r+1)=(2r)!\), we get
\[
p_{j,\ell}(a)
\le
\frac{(2r)!}{(r!)^2}\,\frac{a^{\ell+2r}}{(\ell+2r)!}
=
\binom{2r}{r}\,\frac{a^{\,j+r}}{(j+r)!},
\]
since \(\ell+2r=j+r\).

For \(\ell=0\), we have \(S_0\equiv 0\), so
\[
p_{j,0}(a)=F_j(a)^2\le \left(\frac{a^j}{j!}\right)^2.
\]
Since
\[
\frac{(2j)!}{(j!)^2(2j)!}=(j!)^{-2},
\]
this agrees with \Cref{eq:pjl-upper}.
\end{proof}

\subsection{The correlation sum is negligible in the saddle window}

Set \(a\defeq \aj\), and let \(j=j(t)\) be such that \(j/t\to 1\), so that \(a\to 0\) as \(t\to\infty\).
Recall that
\[
\sum_{v\ne w}\PP{I_v=I_w=1}
=
\sum_{\ell=0}^{j-1} N_{j,\ell}\,p_{j,\ell}(a),
\qquad
N_{j,\ell}=2^{\,2j-\ell-1}.
\]

\begin{lemma}\label{lem:corr-sum-negligible}
Uniformly for \(j=j(t)\) with \(j/t\to 1\),
\begin{equation}\label{eq:corr-sum-o-mu}
\sum_{\ell=0}^{j-1} N_{j,\ell}\,p_{j,\ell}(a)
=
\smallo\!\bigl(\mu_j(t)\bigr)
\qquad (t\to\infty).
\end{equation}
\end{lemma}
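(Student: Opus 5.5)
The plan is to compare each term of the correlation sum directly with the lower bound for \(\mu_j(t)\) from \Cref{eq:Zj-mean-2sided}, using the uniform estimate of \Cref{lem:pjl-upper}. The ratios should be dominated by a geometric series whose ratio tends to zero.

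If \(a=\aj\le 0\), all the events are null and both sides vanish, so there is nothing to prove. We may therefore assume \(a\in(0,1]\). This holds for all large \(t\), since \(j/t\to 1\) forces \(a\to 0\).

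Reindex the correlation sum by \(r=j-\ell\in\{1,\dots,j\}\), so that \(N_{j,\ell}=2^{j+r-1}\). By \Cref{lem:pjl-upper},
\[
N_{j,\ell}\,p_{j,\ell}(a)\le 2^{j+r-1}\binom{2r}{r}\frac{a^{j+r}}{(j+r)!}.
\]
Dividing by the lower bound \(\mu_j(t)\ge 2^j e^{-a}a^j/j!\), each term is at most
\[
\tfrac12 e^{a}\,2^{r}\binom{2r}{r}\,a^{r}\,\frac{j!}{(j+r)!}.
\]
We then use the elementary estimates \(\binom{2r}{r}\le 4^r\) and \(j!/(j+r)!\le (j+1)^{-r}\). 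These give, for each term,
\[
\frac{N_{j,\ell}\,p_{j,\ell}(a)}{\mu_j(t)}\le \frac{e^{a}}{2}\Bigl(\frac{8a}{j+1}\Bigr)^{r}.
\]

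Summing over \(r\ge 1\) yields
\[
\frac{1}{\mu_j(t)}\sum_{\ell=0}^{j-1}N_{j,\ell}\,p_{j,\ell}(a)\le \frac{e^{a}}{2}\cdot\frac{8a/(j+1)}{1-8a/(j+1)}.
\]
This is valid as soon as \(8a<j+1\). The right-hand side depends only on \(a\) and \(j\). It tends to \(0\) because \(a\to 0\) and \(j\to\infty\); in fact either alone suffices, since \(a\le 1\). Hence the bound is uniform over all choices \(j=j(t)\) with \(j/t\to 1\), which gives \Cref{eq:corr-sum-o-mu}.

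The only delicate point is the denominator. The lower bound in \Cref{eq:Zj-mean-2sided} requires \(a\in[0,1]\), which is why the trivial case \(a\le 0\) is separated out first. Beyond that, the key step is recognizing that the factorial ratio \(j!/(j+r)!\) beats the exponential factor \(8^r\). I expect no real obstacle. Note that this ratio decays like \(j^{-r}\) for large \(j\), so the argument is comfortable: the conclusion already follows from \(j\to\infty\) with \(a\) merely bounded.
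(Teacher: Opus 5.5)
Your proposal is correct and follows essentially the same route as the paper. You bound each term via Lemma~\ref{lem:pjl-upper}, reindex by $r=j-\ell$, divide by a lower bound for $\mu_j(t)$, and dominate the result by the geometric series $\sum_{r\ge1}(8a/(j+1))^r$ using $\binom{2r}{r}\le 4^r$ and $(j+r)!\ge j!\,(j+1)^r$. The differences from the paper are minor: you use the explicit lower bound $\mu_j(t)\ge 2^j e^{-a}a^j/j!$ (giving the constant $e^a/2$) where the paper uses the asymptotic form $1/\mu_j(t)\le 2\,j!/(2^j a^j)$, and you explicitly handle the degenerate case $a\le 0$ (possible when $j+1\le t$), which the paper skips.
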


\begin{proof}
By \Cref{lem:pjl-upper}, for \(r\defeq j-\ell\ge 1\),
\[
p_{j,\ell}(a)
\le
\binom{2r}{r}\,\frac{a^{\,j+r}}{(j+r)!}.
\]
Since
\[
N_{j,\ell}=2^{\,2j-\ell-1}=2^{\,j+r-1},
\]
it follows that
\[
N_{j,\ell}\,p_{j,\ell}(a)
\le
2^{\,j+r-1}\binom{2r}{r}\,\frac{a^{\,j+r}}{(j+r)!}.
\]
Summing over \(\ell\) is equivalent to summing over \(r\in\{1,\dots,j\}\), and therefore
\begin{equation}\label{eq:corr-sum-after-r}
\sum_{\ell=0}^{j-1} N_{j,\ell}\,p_{j,\ell}(a)
\le
\sum_{r=1}^{j} 2^{\,j+r-1}\binom{2r}{r}\,\frac{a^{\,j+r}}{(j+r)!}.
\end{equation}

Next, since \(a\to 0\), for all sufficiently large \(t\) we may use \Cref{eq:Zj-mean-asymp} to obtain
\[
\mu_j(t)
=
2^j\frac{a^j}{j!}\bigl(1+O(a)\bigr),
\]
and hence
\[
\frac{1}{\mu_j(t)}
\le
\frac{2\,j!}{2^j a^j}.
\]
Combining this with \Cref{eq:corr-sum-after-r}, we get
\[
\frac{1}{\mu_j(t)}
\sum_{\ell=0}^{j-1} N_{j,\ell}\,p_{j,\ell}(a)
\le
\sum_{r=1}^{j}
2^r\binom{2r}{r}a^r\,\frac{j!}{(j+r)!}.
\]
Using the elementary bounds
\[
\binom{2r}{r}\le \sum_{k=0}^{2r}\binom{2r}{k} = (1+1)^{2r}=4^r
\qquad\text{and}\qquad
(j+r)!\ge j!\,(j+1)^r,
\]
we obtain, for every \(r\ge 1\),
\[
2^r\binom{2r}{r}a^r\,\frac{j!}{(j+r)!}
\le
2^r\cdot 4^r\cdot a^r\cdot \frac{1}{(j+1)^r}
=
\left(\frac{8a}{j+1}\right)^r.
\]
Therefore
\[
\frac{1}{\mu_j(t)}
\sum_{\ell=0}^{j-1} N_{j,\ell}\,p_{j,\ell}(a)
\le
\sum_{r=1}^{j}\left(\frac{8a}{j+1}\right)^r.
\]

Set
\[
q\defeq \frac{8a}{j+1}.
\]
For all sufficiently large \(t\), we have \(q<1\), and so
\[
\sum_{r=1}^{j} q^r
=
\frac{q(1-q^j)}{1-q}
\le
\frac{q}{1-q}.
\]
Since \(j=j(t)\sim t\) and \(a=\aj\to 0\) as \(t\to\infty\), we have \(q\to 0\).
Thus
\[
\frac{1}{\mu_j(t)}
\sum_{\ell=0}^{j-1} N_{j,\ell}\,p_{j,\ell}(a)
\to 0,
\]
which proves \Cref{eq:corr-sum-o-mu}.
\end{proof}

As a consequence,
\begin{equation}\label{eq:Z2-mean}
\EE{Z_j(t)^2}
=
\mu_j(t)+o\!\bigl(\mu_j(t)\bigr)
\qquad\text{as } t\to\infty \text{ with } j=j(t)\sim t.
\end{equation}

\subsection{Hitting probability on a level}

Set \(a\defeq \aj\).
By the Cauchy--Schwarz inequality,
\[
\PP{Z_j(t)\ge 1}
\ge
\frac{\EE{Z_j(t)}^{\,2}}{\EE{Z_j(t)^2}}
=
\frac{\mu_j(t)^{\,2}}{\EE{Z_j(t)^2}}.
\]
Using \Cref{eq:Z2-mean}, we have
\[
\EE{Z_j(t)^2}
=
\mu_j(t)+\smallo\!\bigl(\mu_j(t)\bigr)
\]
uniformly for \(j=j(t)\sim t\).
Hence
\begin{equation}\label{eq:PZ1-asymp}
\PP{Z_j(t)\ge 1}
=
(1-\smallo(1))\,\mu_j(t)
\qquad (t\to\infty,\ j\sim t).
\end{equation}
Combining this with \Cref{eq:Zj-mean-asymp}, we obtain
\[
\PP{Z_j(t)\ge 1}
\sim
2^j\,\frac{a^{\,j}}{j!}
\qquad\text{for } a=\aj\to 0,\ j\sim t.
\]

\subsection{From one level to a global upper bound}

By the event inclusion \Cref{eq:suff-cond}, for every level \(j\),
\begin{equation}\label{eq:S-lb-onelevel}
\PP{\Svar>t}
\ge
\PP{Z_j(t)\ge 1}
=
(1-\smallo(1))\,\mu_j(t)
\qquad (t\to\infty,\ j\sim t),
\end{equation}
where we used \Cref{eq:PZ1-asymp}.
It therefore remains to maximize \(\mu_j(t)\) over \(j\sim t\).

Write
\[
L\defeq \log t,
\]
and parameterize
\[
j=t\Bigl(1+\frac{c}{L}\Bigr),
\qquad c>0.
\]
Then
\[
a_j(t)=\log\frac{j+1}{t}=\frac{c}{L}+\smallo(L^{-1}),
\]
and, by \Cref{eq:Zj-mean-asymp,lem:stirling},
\[
\log \mu_j(t)
=
j\log 2+j\log a_j(t)-\log j!+\smallo(t)
=
-t\log t-t\log\log t+t\bigl[\log 2+\log c+1-c\bigr]+\smallo(t).
\]
The bracketed term is maximized at \(c^\star=1\).
Thus, for
\[
j^\star
=
t\Bigl(1+\frac{1}{\log t}\Bigr)
+
\smallo\!\Bigl(\frac{t}{\log t}\Bigr),
\]
we have
\[
\log \mu_{j^\star}(t)
=
-t\log t-t\log\log t+t\log 2+\smallo(t).
\]
Combining this with \Cref{eq:S-lb-onelevel}, we obtain
\begin{equation}\label{eq:S-lb-final}
\PP{\Svar>t}
\ge
\exp\!\Bigl(-t\log t-t\log\log t+t\log 2+\smallo(t)\Bigr).
\end{equation}
Equivalently,
\[
-\log \PP{\Svar>t}
\le
t\log t+t\log\log t-t\log 2+\smallo(t).
\]

\section{Lower bound via Chernoff and a pointwise MGF bound}
\label{sec:lower}

In this section we prove the lower bound in \Cref{prop:intro-rate-bounds}.
The argument is based on the truncated sums \(\Svar^{(n)}\) introduced in \Cref{sec:tree}.
We first derive a recursive comparison inequality for their moment generating functions.
We then show that, for each fixed \(\theta>0\), the explicit value
\[
x_\theta \defeq \exp(2e^\theta)
\]
dominates this recursion.
Passing to the limit yields a pointwise bound on the moment generating function of \(\Svar\),
and a final Chernoff optimization gives the desired estimate on \(\PP{\Svar>t}\).

\subsection{Truncated sums and their recursion}
\label{subsec:trunc-recursion}

Recall from \Cref{eq:def-S} that
\[
\Svar=\sup_p \sum_{r=0}^\infty W_r(p),
\]
where the supremum is taken over all infinite rooted paths \(p\) in \(\mathcal{T}\).
We also recall from \Cref{sec:tree} the truncated sums
\[
\Svar^{(n)}=\sup_p \sum_{r=0}^{n} W_r(p),
\qquad n\ge 0,
\]
which satisfy
\[
\Svar^{(n)}\nearrow \Svar
\qquad\text{almost surely as }n\to\infty.
\]

For \(\theta> 0\), define the moment generating functions
\[
\psi_n(\theta)\defeq \EE{e^{\theta \Svar^{(n)}}},
\qquad
\psi(\theta)\defeq \EE{e^{\theta \Svar}}.
\]
By monotone convergence,
\[
\psi_n(\theta)\uparrow \psi(\theta)
\qquad\text{for each fixed }\theta> 0.
\]

\begin{lemma}[Distributional recursion for \(\Svar^{(n)}\)]
\label{lem:trunc-recursion}
For every \(n\ge 0\),
\begin{equation}\label{eq:trunc-recursion}
\Svar^{(n+1)}
\eqd
1+\max\{\U\,(\Svar^{(n)})',\,(1-\U)\,(\Svar^{(n)})''\},
\end{equation}
where \(\U\sim\Unif(0,1)\), the random variables \((\Svar^{(n)})'\) and \((\Svar^{(n)})''\) are
independent copies of \(\Svar^{(n)}\), and \(\U\) is independent of both of them.
\end{lemma}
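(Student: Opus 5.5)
The plan is to prove \Cref{eq:trunc-recursion} as an almost sure identity on the tree, exactly as was done for \(\Svar\) in the paragraph on consistency with the fixed point, and then read off the distributional statement from the independence structure of the labels.

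\textbf{Pathwise decomposition.} Let \(\varnothing\) be the root and \(\U:=\xi_\varnothing\). Every infinite rooted path \(p\) has the form \(p=(\varnothing,p')\), where \(p'\) is an infinite rooted path in either the left subtree \(\mathcal T_L\) or the right subtree \(\mathcal T_R\). If \(p\) goes left, then \(U_1(p)=\U\) and \(U_{r+1}(p)=U_r(p')\) for \(r\ge 1\), where the factors of \(p'\) are computed from the labels of \(\mathcal T_L\). Hence \(W_{r+1}(p)=\U\,W_r(p')\). If \(p\) goes right, the same holds with \(\U\) replaced by \(1-\U\). Therefore
\[
\sum_{r=0}^{n+1}W_r(p)=1+\U\sum_{r=0}^{n}W_r(p')
\qquad\text{or}\qquad
1+(1-\U)\sum_{r=0}^{n}W_r(p').
\]
Taking the supremum over \(p\) means taking it separately over left-going and right-going paths. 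Since multiplication by the nonnegative constant \(\U\) or \(1-\U\) commutes with the supremum, we get
\[
\Svar^{(n+1)}=1+\max\{\U\,\Svar^{(n)}_L,\,(1-\U)\,\Svar^{(n)}_R\}
\]
almost surely, where \(\Svar^{(n)}_L\) and \(\Svar^{(n)}_R\) are the truncated functionals built from \(\mathcal T_L\) and \(\mathcal T_R\). All quantities here are finite, since \(\Svar^{(n)}\le n+1\), so there is no issue with \(\infty\cdot 0\).

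\textbf{Identification of the law.} The variable \(\Svar^{(n)}_L\) is a measurable function of the labels \((\xi_v)_{v\in\mathcal T_L}\), and \(\Svar^{(n)}_R\) is the same function applied to \((\xi_v)_{v\in\mathcal T_R}\). Each subtree is canonically isomorphic to \(\mathcal T\), with i.i.d.\ \(\Unif(0,1)\) labels, and the label families on \(\mathcal T_L\), on \(\mathcal T_R\), and \(\{\xi_\varnothing\}\) are mutually independent. It follows that \(\Svar^{(n)}_L\) and \(\Svar^{(n)}_R\) are independent copies of \(\Svar^{(n)}\), independent of \(\U\), which gives \Cref{eq:trunc-recursion}.

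The only step needing any care is measurability: \(\Svar^{(n)}\) should be a genuine random variable that is a fixed function of the labels. This holds because \(\sum_{r=0}^n W_r(p)\) depends only on the first \(n\) steps of \(p\). The supremum is therefore a maximum over the finitely many \(2^n\) depth-\(n\) prefixes, so \(\Svar^{(n)}\) is a continuous function of finitely many labels. Beyond this bookkeeping, I expect no real obstacle.
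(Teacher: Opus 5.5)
Your proposal is correct and follows essentially the same route as the paper: the almost sure root decomposition into left- and right-going paths, followed by identifying the two subtree suprema as independent copies of $\Svar^{(n)}$, independent of $\U$, via the i.i.d.\ labelling. Your remark that $\Svar^{(n)}$ is a maximum over the $2^n$ depth-$n$ prefixes, and hence a genuine random variable bounded by $n+1$, is a small piece of bookkeeping that the paper leaves implicit.
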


\begin{proof}
Let \(\varnothing\) denote the root of \(\mathcal{T}\), and set
\[
\U\defeq \xi_{\varnothing}\sim\Unif(0,1).
\]
Fix \(n\ge 0\).
Every infinite rooted path starts by entering either the left or the right subtree of the root.
If a path enters the left subtree, then
\[
\sum_{r=0}^{n+1} W_r(p)
=
1+\U\sum_{r=0}^{n} W_r^{(L)}(p),
\]
where \(W_r^{(L)}(p)\) denotes the cumulative weight generated by the next \(r\) steps inside the
left subtree.
Similarly, if a path enters the right subtree, then
\[
\sum_{r=0}^{n+1} W_r(p)
=
1+(1-\U)\sum_{r=0}^{n} W_r^{(R)}(p),
\]
where \(W_r^{(R)}(p)\) is defined analogously inside the right subtree.

Taking the supremum over all paths yields the almost sure identity
\[
\Svar^{(n+1)}
=
1+\max\left\{
\U\,\sup_{p\text{ in left subtree}}\sum_{r=0}^{n} W_r^{(L)}(p),\,
(1-\U)\,\sup_{p\text{ in right subtree}}\sum_{r=0}^{n} W_r^{(R)}(p)
\right\}.
\]
By the i.i.d.\ labelling of the vertices, the two subtree suprema are independent, each has the
same law as \(\Svar^{(n)}\), and both are independent of \(\U\).
This proves \Cref{eq:trunc-recursion}.
\end{proof}

\subsection{MGF comparison at a fixed parameter}
\label{subsec:mgf-recursion}

We collect here the technical ingredients needed for the Chernoff bound.
H\"older's inequality yields a log-convexity estimate for \(\psi_n(\theta u)\), which in turn gives
a recursive upper bound
\[
\psi_{n+1}(\theta)\le G_\theta(\psi_n(\theta))
\]
for a suitable scalar map \(G_\theta\).
We then show that any fixed point upper barrier \(x_\theta\ge 1\) with
\[
G_\theta(x_\theta)\le x_\theta
\]
implies \(\psi(\theta)\le x_\theta\).
The next subsection provides an explicit choice of \(x_\theta\) and performs the final Chernoff
optimization.

\begin{lemma}[H\"older / log-convexity]
\label{lem:holder-logconvex}
For every \(n\ge 0\), every \(\theta> 0\), and every \(u\in[0,1]\),
\begin{equation}\label{eq:psi-logconvex}
\psi_n(\theta u)\le \psi_n(\theta)^u.
\end{equation}
\end{lemma}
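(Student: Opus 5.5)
We apply Hölder's inequality directly to the expectation defining \(\psi_n(\theta u)\). Fix \(n\ge 0\), \(\theta>0\) and \(u\in[0,1]\).

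\textbf{Endpoints.} For \(u=0\) both sides of \Cref{eq:psi-logconvex} equal \(1\), since \(\psi_n(0)=1\) and \(\psi_n(\theta)^0=1\). For \(u=1\) the inequality is an equality. So we may assume \(u\in(0,1)\).

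\textbf{Finiteness.} The right-hand side is finite, so the inequality is meaningful. Indeed, \(\Svar^{(n)}=\sup_p\sum_{r=0}^n W_r(p)\le n+1\) almost surely, because every \(W_r(p)\in[0,1]\). Hence \(\psi_n(\theta)\le e^{\theta(n+1)}<\infty\). (Finiteness is not actually needed; the argument below also works with the value \(+\infty\).)

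\textbf{Main step.} Write
\[
e^{\theta u \Svar^{(n)}}=\bigl(e^{\theta \Svar^{(n)}}\bigr)^u\cdot 1 .
\]
Apply Hölder's inequality with conjugate exponents \(p=1/u\) and \(q=1/(1-u)\):
\[
\psi_n(\theta u)=\EE{\bigl(e^{\theta\Svar^{(n)}}\bigr)^u\cdot 1}
\le \EE{e^{\theta\Svar^{(n)}}}^{u}\,\EE{1}^{1-u}=\psi_n(\theta)^u .
\]
An equivalent route is Jensen's inequality for the concave map \(x\mapsto x^u\) on \([0,\infty)\), applied to the nonnegative random variable \(e^{\theta\Svar^{(n)}}\):
\[
\EE{X^u}\le \EE{X}^u .
\]

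\textbf{Obstacle.} There is no real obstacle. The only points needing care are the degenerate exponents \(u\in\{0,1\}\), handled separately above, and the integrability of \(e^{\theta\Svar^{(n)}}\), which follows from the deterministic bound \(\Svar^{(n)}\le n+1\).
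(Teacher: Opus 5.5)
Your proof is correct and follows the paper's argument: H\"older's inequality with exponents \(p=1/u\), \(q=1/(1-u)\) applied to \(\bigl(e^{\theta\Svar^{(n)}}\bigr)^u\cdot 1\). Your separate treatment of \(u\in\{0,1\}\) (which the paper handles ``by continuity'') and your finiteness remark via \(\Svar^{(n)}\le n+1\) are harmless additions.
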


\begin{proof}
By H\"older's inequality with exponents \(p=1/u\) and \(q=1/(1-u)\) (interpreting the cases
\(u=0\) and \(u=1\) by continuity),
\[
\psi_n(\theta u)
=
\EE{\bigl(e^{\theta \Svar^{(n)}}\bigr)^u}
\le
\bigl(\EE{e^{\theta \Svar^{(n)}}}\bigr)^u
=
\psi_n(\theta)^u.
\]
\end{proof}

For each fixed \(\theta> 0\), define the scalar map
\begin{equation}\label{eq:Gtheta-def}
G_\theta(x)\defeq 2e^\theta\,\frac{x-1}{\log x}
\qquad (x>1),
\qquad
G_\theta(1)\defeq \lim_{x\downarrow 1}G_\theta(x)=2e^\theta.
\end{equation}
Since \(x\mapsto (x-1)/\log x\) is increasing on \([1,\infty)\), the map \(G_\theta\) is continuous
and increasing on \([1,\infty)\).

\begin{lemma}[Recursive bound for the MGF]
\label{lem:local-onestep}
For every \(n\ge 0\) and every \(\theta> 0\),
\begin{equation}\label{eq:local-onestep}
\psi_{n+1}(\theta)\le G_\theta\!\bigl(\psi_n(\theta)\bigr).
\end{equation}
\end{lemma}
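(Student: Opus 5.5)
We start from the distributional recursion of \Cref{lem:trunc-recursion} and bound the exponential of a maximum by a sum. Since \(e^{\max\{x,y\}}\le e^x+e^y\) for all real \(x,y\), \Cref{eq:trunc-recursion} gives
\[
e^{\theta\Svar^{(n+1)}}
\le
e^{\theta}\Bigl(e^{\theta\U(\Svar^{(n)})'}+e^{\theta(1-\U)(\Svar^{(n)})''}\Bigr)
\]
in distribution. Taking expectations, we condition on \(\U\) and use the independence of \(\U\), \((\Svar^{(n)})'\) and \((\Svar^{(n)})''\). Both copies have the law of \(\Svar^{(n)}\), and \(1-\U\eqd\U\), so
\[
\psi_{n+1}(\theta)\le e^\theta\Bigl(\EE{\psi_n(\theta\U)}+\EE{\psi_n(\theta(1-\U))}\Bigr)=2e^\theta\int_0^1\psi_n(\theta u)\,\dd u .
\]

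Next we apply \Cref{lem:holder-logconvex} pointwise in \(u\in[0,1]\), which gives \(\psi_n(\theta u)\le \psi_n(\theta)^u\). Write \(x\defeq\psi_n(\theta)\). Then \(x\ge 1\), because \(\theta>0\) and \(\Svar^{(n)}\ge 1\ge 0\). Integrating in \(u\) gives
\[
\int_0^1 x^u\,\dd u=\frac{x-1}{\log x}\quad(x>1),
\]
and this integral equals \(1\) when \(x=1\), which matches the convention \(G_\theta(1)=2e^\theta\). Combining the two displays gives \(\psi_{n+1}(\theta)\le G_\theta(\psi_n(\theta))\), which is \Cref{eq:local-onestep}.

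There is one technical point: we need \(\psi_n(\theta)<\infty\), so that the manipulations are meaningful and \(x\) lies in the domain of \(G_\theta\). This holds because \(\Svar^{(n)}\le n+1\) deterministically, since each \(W_r(p)\le 1\). Hence \(\psi_n(\theta)\le e^{\theta(n+1)}<\infty\) for every \(n\). Apart from this finiteness check and the convention at \(x=1\), the argument is routine; the step that actually drives the bound is the inequality \(e^{\max\{x,y\}}\le e^x+e^y\). It loses at most a factor \(2\), and that factor is exactly the \(2\) appearing in \(G_\theta\).
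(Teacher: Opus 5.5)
Your proof is correct and follows the paper's argument step for step. Both use $e^{\max\{x,y\}}\le e^x+e^y$, condition on $\U$ to reach $2e^\theta\int_0^1\psi_n(\theta u)\,\dd u$, then apply \Cref{lem:holder-logconvex} and the identity $\int_0^1 a^u\,\dd u=(a-1)/\log a$. Your extra checks, that $\psi_n(\theta)\le e^{\theta(n+1)}<\infty$ because $\Svar^{(n)}\le n+1$ and that $\psi_n(\theta)\ge 1$, are valid details the paper leaves implicit.
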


\begin{proof}
By the distributional recursion in \Cref{eq:trunc-recursion} and the elementary inequality
\(e^{\max\{x,y\}}\le e^x+e^y\),
\[
\psi_{n+1}(\theta)
=
\EE{e^{\theta \Svar^{(n+1)}}}
\le
e^\theta\,\EE{e^{\theta \U (\Svar^{(n)})'}+e^{\theta (1-\U)(\Svar^{(n)})''}}.
\]
Conditioning on \(\U\sim \Unif(0,1)\) and using independence of \(\U\), \((\Svar^{(n)})'\), and
\((\Svar^{(n)})''\), we obtain
\[
\psi_{n+1}(\theta)
\le
2e^\theta\int_0^1 \EE{e^{\theta u \Svar^{(n)}}}\,\dd u
=
2e^\theta\int_0^1 \psi_n(\theta u)\,\dd u.
\]
By \Cref{lem:holder-logconvex},
\[
\int_0^1 \psi_n(\theta u)\,\dd u
\le
\int_0^1 \psi_n(\theta)^u\,\dd u
=
\frac{\psi_n(\theta)-1}{\log \psi_n(\theta)},
\]
where we use the identity
\[
\int_0^1 a^u\,\dd u=\frac{a-1}{\log a},
\qquad a>0,
\]
with the value at \(a=1\) understood by continuity.
Substituting this into the previous estimate gives \Cref{eq:local-onestep}.
\end{proof}

\begin{lemma}[Scalar domination at a fixed parameter]
\label{lem:scalar-domination}
Fix \(\theta>0\), and let \(x_\theta\ge 1\) satisfy
\begin{equation}\label{eq:scalar-barrier}
G_\theta(x_\theta)\le x_\theta.
\end{equation}
Then
\[
\psi(\theta)\le x_\theta.
\]
\end{lemma}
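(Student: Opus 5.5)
Fix \(\theta>0\) and a barrier \(x_\theta\ge 1\) with \(G_\theta(x_\theta)\le x_\theta\). The plan is to prove by induction on \(n\) that
\[
\psi_n(\theta)\le x_\theta\qquad\text{for every }n\ge 0,
\]
and then let \(n\to\infty\). Since \(\psi_n(\theta)\uparrow\psi(\theta)\) by monotone convergence (recorded in \Cref{subsec:trunc-recursion}), the uniform bound passes to the limit and gives \(\psi(\theta)\le x_\theta\).

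\emph{Base case.} We have \(\Svar^{(0)}=\sup_p W_0(p)=1\) deterministically, so \(\psi_0(\theta)=e^\theta\). We must check \(e^\theta\le x_\theta\). From \Cref{eq:Gtheta-def}, \(G_\theta(x)\ge G_\theta(1)=2e^\theta\) for all \(x\ge1\), because \((x-1)/\log x\) is increasing and tends to \(1\) as \(x\downarrow1\). Hence the barrier condition forces
\[
x_\theta\ge G_\theta(x_\theta)\ge 2e^\theta> e^\theta,
\]
which settles the base case.

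\emph{Induction step.} Suppose \(\psi_n(\theta)\le x_\theta\). Note that \(\psi_n(\theta)\ge 1\), since \(\Svar^{(n)}\ge 1>0\) and \(\theta>0\). So both \(\psi_n(\theta)\) and \(x_\theta\) lie in \([1,\infty)\), where \(G_\theta\) is increasing. Then \Cref{lem:local-onestep} gives
\[
\psi_{n+1}(\theta)\le G_\theta(\psi_n(\theta))\le G_\theta(x_\theta)\le x_\theta.
\]

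\emph{Possible gap: finiteness.} Each \(\psi_n(\theta)\) must be finite for the comparison to be meaningful. This is automatic, because \(\Svar^{(n)}\le n+1\) (every \(W_r\le 1\)), so \(\psi_n(\theta)\le e^{\theta(n+1)}<\infty\). The only substantive ingredient is the monotonicity of \(G_\theta\) on \([1,\infty)\). This holds because \(\int_0^1 x^u\,\dd u\) is increasing in \(x\), and that representation also gives continuity at \(x=1\). I do not expect a real obstacle here; the care needed is in the base case, namely seeing that the barrier inequality itself already forces \(x_\theta\ge 2e^\theta\).
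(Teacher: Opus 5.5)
Your proof is correct and follows the paper's argument essentially step for step: the barrier inequality forces $x_\theta\ge G_\theta(x_\theta)\ge 2e^\theta\ge \psi_0(\theta)$; induction using the increasing map $G_\theta$ and \Cref{lem:local-onestep} then gives $\psi_n(\theta)\le x_\theta$ for all $n$; and monotone convergence finishes the argument. You also check that $\psi_n(\theta)\ge 1$, so $G_\theta$ is only evaluated where it is increasing, and that each $\psi_n(\theta)$ is finite — two points the paper leaves implicit.
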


\begin{proof}
Set \(a_n\defeq \psi_n(\theta)\) for \(n\ge 0\).
Since
\[
\frac{x-1}{\log x}\ge 1
\qquad (x\ge 1),
\]
we have
\[
G_\theta(x)\ge 2e^\theta
\qquad (x\ge 1).
\]
Thus \eqref{eq:scalar-barrier} implies
\[
x_\theta\ge G_\theta(x_\theta)\ge 2e^\theta.
\]
In particular,
\[
a_0=\psi_0(\theta)=e^\theta\le x_\theta.
\]

Moreover, by \Cref{eq:local-onestep},
\[
a_{n+1}\le G_\theta(a_n)
\qquad (n\ge 0).
\]
Since \(G_\theta\) is increasing and \(G_\theta(x_\theta)\le x_\theta\), induction yields
\[
a_n\le x_\theta
\qquad\text{for all }n\ge 0.
\]
Finally, \(a_n=\psi_n(\theta)\uparrow \psi(\theta)\), so passing to the limit gives
\[
\psi(\theta)\le x_\theta.
\]
\end{proof}

\subsection{An explicit scalar barrier and Chernoff optimization}
\label{subsec:chernoff-final}

For each fixed \(\theta>0\), set
\[
x_\theta \defeq \exp(2e^\theta).
\]

\begin{proposition}\label{prop:mgf-pointwise}
For every \(\theta>0\),
\begin{equation}\label{eq:psi-pointwise}
\psi(\theta)\le \exp(2e^\theta).
\end{equation}
\end{proposition}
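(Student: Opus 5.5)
By \Cref{lem:scalar-domination}, it suffices to check the scalar barrier condition \eqref{eq:scalar-barrier} for the explicit choice \(x_\theta=\exp(2e^\theta)\). Clearly \(x_\theta\ge 1\). Since \(x_\theta>1\), definition \eqref{eq:Gtheta-def} gives
\[
G_\theta(x_\theta)=2e^\theta\,\frac{x_\theta-1}{\log x_\theta}.
\]
Substituting \(\log x_\theta=2e^\theta\), the prefactor cancels exactly:
\[
G_\theta(x_\theta)=2e^\theta\cdot\frac{x_\theta-1}{2e^\theta}=x_\theta-1\le x_\theta .
\]
So \eqref{eq:scalar-barrier} holds. \Cref{lem:scalar-domination} then yields \(\psi(\theta)\le x_\theta=\exp(2e^\theta)\), which is \eqref{eq:psi-pointwise}.

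There is no real obstacle, because the work has already been done in \Cref{lem:local-onestep,lem:scalar-domination}. The only point worth checking is that the lemma's hypotheses are met. We need \(x_\theta\ge 2e^\theta\), so that \(a_0=e^\theta\le x_\theta\); this follows automatically from the barrier inequality inside the proof, and in any case from \(e^{y}\ge y\) with \(y=2e^\theta\). We also need \(G_\theta\) to be increasing on \([1,\infty)\), which was noted after \eqref{eq:Gtheta-def}.

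The choice \(\log x_\theta=2e^\theta\) is dictated precisely by making \(\log x\) cancel the factor \(2e^\theta\) in \(G_\theta\).
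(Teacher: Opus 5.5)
Your proposal is correct and matches the paper's proof: you use $\log x_\theta=2e^\theta$ to get $G_\theta(x_\theta)=x_\theta-1\le x_\theta$, and then invoke \Cref{lem:scalar-domination}. Your extra check that $a_0=e^\theta\le x_\theta$ is valid but redundant, since that lemma's proof already derives it from the barrier inequality.
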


\begin{proof}
Let \(\theta>0\) be fixed.
Since
\[
\log x_\theta = 2e^\theta,
\]
the definition of \(G_\theta\) gives
\[
G_\theta(x_\theta)
=
2e^\theta\,\frac{x_\theta-1}{\log x_\theta}
=
2e^\theta\,\frac{x_\theta-1}{2e^\theta}
=
x_\theta-1
\le x_\theta.
\]
Thus \Cref{lem:scalar-domination} applies and yields
\[
\psi(\theta)\le x_\theta=\exp(2e^\theta).
\]
\end{proof}

\begin{corollary}\label{cor:chernoff-pointwise}
For every \(t>0\),
\begin{equation}\label{eq:chernoff-inf}
\PP{\Svar>t}
\le
\inf_{\theta>0}\exp\!\bigl(-\theta t+2e^\theta\bigr).
\end{equation}
In particular, for every \(t>2\),
\begin{equation}\label{eq:chernoff-explicit}
\PP{\Svar>t}
\le
\exp\!\Bigl(-t\log(t/2)+t\Bigr)
=
\exp\!\Bigl(-t\log t+(1+\log 2)t\Bigr).
\end{equation}
Equivalently,
\begin{equation}\label{eq:lower-bound-final}
-\log \PP{\Svar>t}
\ge
t\log t-(1+\log 2)t,
\qquad t>2.
\end{equation}
\end{corollary}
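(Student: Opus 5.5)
The plan is to combine Markov's inequality in exponential form with the pointwise MGF bound of \Cref{prop:mgf-pointwise}, and then optimize in \(\theta\).

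First, fix \(t>0\) and \(\theta>0\). Since \(x\mapsto e^{\theta x}\) is increasing and positive, Markov's inequality gives
\[
\PP{\Svar>t}\le \PP{e^{\theta\Svar}\ge e^{\theta t}}\le e^{-\theta t}\,\psi(\theta).
\]
By \Cref{prop:mgf-pointwise}, \(\psi(\theta)\le \exp(2e^\theta)\), so \(\PP{\Svar>t}\le \exp(-\theta t+2e^\theta)\). Taking the infimum over \(\theta>0\) gives \Cref{eq:chernoff-inf}.

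For the explicit bound, I minimize the exponent \(h(\theta)=-\theta t+2e^\theta\). Its derivative \(h'(\theta)=-t+2e^\theta\) vanishes at \(\theta^\star=\log(t/2)\), and \(\theta^\star>0\) exactly when \(t>2\). This is the only place the hypothesis \(t>2\) enters: it makes \(\theta^\star\) admissible. Substituting,
\[
h(\theta^\star)=-t\log(t/2)+2\cdot\frac t2=-t\log t+t\log 2+t,
\]
which is \Cref{eq:chernoff-explicit}. Actually we only need that \(\theta^\star\) is admissible, not that it is the minimizer, since the infimum is at most \(h(\theta^\star)\).

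Taking \(-\log\) of both sides, which reverses the inequality, yields \Cref{eq:lower-bound-final}, i.e.\ the lower bound in \Cref{prop:intro-rate-bounds}.

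There is no real obstacle. The only care points are the admissibility \(\theta^\star>0\) and the fact that \(\psi(\theta)\) is finite, which \Cref{prop:mgf-pointwise} guarantees, so Markov's inequality is not vacuous.
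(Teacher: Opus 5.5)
Your proposal is correct and follows essentially the same route as the paper: Chernoff's inequality combined with the bound $\psi(\theta)\le\exp(2e^\theta)$ of \Cref{prop:mgf-pointwise}, then evaluation of the exponent at $\theta^\star=\log(t/2)$. The only difference is that the paper also uses strict convexity to identify $\theta^\star$ as the global minimizer; as you note, this is not needed for the upper bound.
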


\begin{proof}
By Chernoff's inequality and \Cref{eq:psi-pointwise}, for every \(\theta>0\),
\[
\PP{\Svar>t}
\le
e^{-\theta t}\psi(\theta)
\le
\exp\!\bigl(-\theta t+2e^\theta\bigr).
\]
Taking the infimum over \(\theta>0\) proves \Cref{eq:chernoff-inf}.

Now fix \(t>2\), and consider
\[
f_t(\theta)\defeq -\theta t+2e^\theta,
\qquad \theta>0.
\]
Then
\[
f_t'(\theta)=-t+2e^\theta,
\qquad
f_t''(\theta)=2e^\theta>0,
\]
so \(f_t\) is strictly convex.
Its unique critical point is
\[
\theta^\star=\log(t/2)>0,
\]
and therefore \(\theta^\star\) is the global minimizer of \(f_t\) on \((0,\infty)\).
Substituting \(\theta^\star\) into \Cref{eq:chernoff-inf} gives
\[
\PP{\Svar>t}
\le
\exp\!\bigl(f_t(\theta^\star)\bigr)
=
\exp\!\Bigl(-t\log(t/2)+t\Bigr),
\]
which is \Cref{eq:chernoff-explicit}.
Taking minus logarithms yields \Cref{eq:lower-bound-final}.
\end{proof}

\section{A monotone distribution--function scheme for bounding \texorpdfstring{$\EE{\Svar}$}{E[S]}}
\label{sec:mean}

In this section we develop a monotone scheme for obtaining explicit upper bounds on
\(\EE{\Svar}\).
The scheme combines the fixed--point equation for \(\Svar\) with the explicit tail majorant
for \(\PP{\Svar>t}\) derived in \Cref{sec:lower}, and is naturally suited to a computer--assisted
implementation.

Define
\[
F(x)\defeq \PP{\Svar\le x}, \qquad x\in\R.
\]
Since \(\supp(\Svar)\subseteq [2,\infty)\), the distribution function \(F\) vanishes on
\((-\infty,2)\) and is nondecreasing on \(\R\).
Furthermore,
\[
\Svar \eqd 1+\max\{\U \Svar', (1-\U)\Svar''\},
\]
where \(\U\sim \Unif(0,1)\), and \(\Svar',\Svar''\) are independent copies of \(\Svar\),
independent of \(\U\).

\begin{proposition}[Distribution--function fixed--point equation]
\label{prop:distribution-function-equation}
For every \(x\in\R\),
\begin{equation}\label{eq:F-fixed-point}
F(x)=\int_0^1
F\!\left(\frac{x-1}{u}\right)
F\!\left(\frac{x-1}{1-u}\right)\dd u.
\end{equation}
\end{proposition}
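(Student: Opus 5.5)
The plan is to compute \(F(x)=\PP{\Svar\le x}\) directly from the distributional identity \(\Svar \eqd 1+\max\{\U\Svar',(1-\U)\Svar''\}\), conditioning on \(\U\) and using independence.

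First, for every \(x\in\R\) the event \(\{1+\max\{\U\Svar',(1-\U)\Svar''\}\le x\}\) equals \(\{\U\Svar'\le x-1\}\cap\{(1-\U)\Svar''\le x-1\}\). Since \(\U\), \(\Svar'\) and \(\Svar''\) are mutually independent, conditioning on \(\U=u\) and applying Fubini gives
\[
F(x)=\int_0^1 \PP{u\Svar'\le x-1}\,\PP{(1-u)\Svar''\le x-1}\,\dd u .
\]
For \(u\in(0,1)\), dividing by \(u>0\) yields \(\PP{u\Svar'\le x-1}=F\bigl((x-1)/u\bigr)\), and dividing by \(1-u>0\) yields \(\PP{(1-u)\Svar''\le x-1}=F\bigl((x-1)/(1-u)\bigr)\). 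The endpoints \(u\in\{0,1\}\) form a Lebesgue-null set, so they do not affect the integral, and \eqref{eq:F-fixed-point} follows.

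The only point needing care is the sign of \(x-1\). If \(x<1\), then \((x-1)/u<0\le 2\), so both factors vanish. This is consistent with the left-hand side, since \(\supp(\Svar)\subseteq[2,\infty)\) gives \(F(x)=0\). If \(x=1\), the arguments equal \(0\) and \(F(0)=0\). So no case distinction is actually needed: the identity \(\PP{u\Svar'\le y}=F(y/u)\) holds for all real \(y\), and the formula is valid for every \(x\in\R\). Measurability of the integrand in \(u\) follows from monotonicity of \(F\) (composition with continuous maps on \((0,1)\)). I expect no real obstacle here; the only subtlety is justifying conditioning, which is just Fubini applied to the product law of \((\U,\Svar',\Svar'')\).
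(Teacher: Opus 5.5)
Your proposal is correct and follows the paper's proof: condition on \(\U\), factor the conditional probability using the independence of \(\Svar'\) and \(\Svar''\), and rewrite each factor as \(F\) evaluated at \((x-1)/u\) and \((x-1)/(1-u)\). Your remarks on the null endpoints \(u\in\{0,1\}\), the case \(x\le 1\), and measurability are minor refinements that the paper leaves implicit.
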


\begin{proof}
By the distributional fixed--point equation for \(\Svar\),
\[
\Svar \eqd 1+\max\{\U \Svar',(1-\U)\Svar''\},
\]
where \(\U\sim \Unif(0,1)\), and \(\Svar',\Svar''\) are independent copies of \(\Svar\),
independent of \(\U\). Hence, conditioning on \(\U\),
\[
F(x)
=
\PP{1+\max\{\U \Svar',(1-\U)\Svar''\}\le x}
=
\EE{\PP{\U \Svar'\le x-1,\ (1-\U)\Svar''\le x-1 \mid \U}}.
\]
Since \(\Svar'\) and \(\Svar''\) are independent and both have distribution function \(F\), this gives
\[
F(x)
=
\int_0^1
\PP{\Svar'\le (x-1)/u}\,
\PP{\Svar''\le (x-1)/(1-u)}\dd u
=
\int_0^1
F\!\left(\frac{x-1}{u}\right)
F\!\left(\frac{x-1}{1-u}\right)\dd u.
\]
This proves \Cref{eq:F-fixed-point}.
\end{proof}

\begin{proposition}[A general upper bound on \(\EE{\Svar}\)]
\label{prop:mean-upper-general}
Let \(A>2\), and let \(L:\R\to[0,1]\) satisfy
\[
L(x)\le F(x),
\qquad x\in(2,A).
\]
Then
\[
\EE{\Svar}
\le
2+\int_2^A \bigl(1-L(x)\bigr)\,\dd x
+\frac{\exp\!\bigl(-A\log(A/2)+A\bigr)}{\log(A/2)}.
\]
\end{proposition}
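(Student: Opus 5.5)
We start from the tail-integral representation. Since $\Svar\ge 2$ almost surely,
\[
\EE{\Svar}=2+\int_2^\infty \PP{\Svar>x}\,\dd x=2+\int_2^\infty\bigl(1-F(x)\bigr)\,\dd x.
\]
We split this integral at $A$. On $(2,A)$ we use the hypothesis $L\le F$ pointwise, which gives $1-F(x)\le 1-L(x)$, so the first piece is at most $\int_2^A(1-L(x))\,\dd x$. This integral is finite because $0\le 1-L\le 1$ on a bounded interval. Measurability of $L$ is implicitly assumed. If needed, one can replace $L$ by $\max(L,0)$ or work with an upper integral.

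For the tail piece on $[A,\infty)$ we invoke the explicit Chernoff majorant \Cref{eq:chernoff-explicit} of \Cref{cor:chernoff-pointwise}. For $x>2$ it gives $\PP{\Svar>x}\le \exp(-g(x))$, where $g(x)\defeq x\log(x/2)-x$. The task is to show
\[
\int_A^\infty e^{-g(x)}\,\dd x\le \frac{e^{-g(A)}}{\log(A/2)}.
\]
We compute $g'(x)=\log(x/2)+1-1=\log(x/2)$. This is positive and increasing for $x>2$, so $g$ is convex there.

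The key step is a comparison with the tangent line at $A$. Convexity gives $g(x)\ge g(A)+(x-A)\log(A/2)$ for $x\ge A$, and therefore
\[
\int_A^\infty e^{-g(x)}\,\dd x\le e^{-g(A)}\int_A^\infty e^{-(x-A)\log(A/2)}\,\dd x=\frac{e^{-g(A)}}{\log(A/2)}.
\]
The integral converges because $\log(A/2)>0$ when $A>2$. Since $e^{-g(A)}=\exp(-A\log(A/2)+A)$, this is exactly the remainder term in the statement.

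Adding the two pieces gives the claimed bound. The only point requiring care is the tangent-line step, specifically checking that $g'(x)=\log(x/2)$ and that it is positive on $(2,\infty)$. Everything else follows directly from \Cref{cor:chernoff-pointwise} and $\supp(\Svar)\subseteq[2,\infty)$.
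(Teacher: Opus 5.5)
Your proposal is correct and matches the paper's proof. Like the paper, you split $\int_2^\infty \PP{\Svar>x}\,\dd x$ at $A$, use $L\le F$ on $(2,A)$, and bound the remainder by the Chernoff majorant of \Cref{cor:chernoff-pointwise} together with the tangent-line estimate $g(x)\ge g(A)+\log(A/2)(x-A)$; the paper obtains that estimate in \Cref{lem:tail-integral-bound} via the mean value theorem with $g'(x)=\log(x/2)$ increasing, which is the same convexity argument you give.
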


\begin{proof}
We split the tail-integral representation at \(A\):
\[
\EE{\Svar}
=
2+\int_2^A \bigl(1-F(x)\bigr)\,\dd x
+\int_A^\infty \bigl(1-F(x)\bigr)\,\dd x.
\]
Since \(L\le F\) on \((2,A)\), we have
\[
\int_2^A \bigl(1-F(x)\bigr)\,\dd x
\le
\int_2^A \bigl(1-L(x)\bigr)\,\dd x.
\]
Moreover, by \Cref{cor:chernoff-pointwise},
\[
\int_A^\infty \bigl(1-F(x)\bigr)\,\dd x
=
\int_A^\infty \PP{\Svar>x}\,\dd x
\le
\int_A^\infty \exp\!\bigl(-x\log(x/2)+x\bigr)\,\dd x.
\]
Applying \Cref{lem:tail-integral-bound} to the last integral gives the claim.
\end{proof}

\subsection{The nonlinear operator and its basic properties}

It is convenient to express the distribution--function fixed--point equation in operator form.
For every measurable function \(G:\R\to[0,1]\), define
\[
(\mathcal K G)(x)\defeq
\int_0^1
G\!\left(\frac{x-1}{u}\right)
G\!\left(\frac{x-1}{1-u}\right)\dd u,
\qquad x\in\R.
\]
Then \Cref{eq:F-fixed-point} may be written simply as
\[
F=\mathcal K F
\qquad\text{on }\R.
\]

Note that if \(G:\R\to[0, 1]\) is measurable, then so is \(\mathcal K G\).
Indeed, \(\mathcal K G\) is measurable and satisfies
\[
0\le (\mathcal K G)(x)\le 1,
\qquad x\in\R.
\]
Hence, for any measurable function \(L_0:\R\to[0, 1]\), we may define
recursively
\[
L_{n+1}\defeq \mathcal K L_n,
\qquad n\ge 0.
\]

\begin{lemma}[Order preservation and monotonicity of \(\mathcal K\)]
\label{lem:K-monotone}
Let \(G,H:\R\to[0,1]\) be measurable functions.

\begin{enumerate}
\item If \(G\le H\) pointwise on \(\R\), then
\[
\mathcal K G\le \mathcal K H
\qquad\text{pointwise on }\R.
\]

\item If \(G\) is nondecreasing on \(\R\), then \(\mathcal K G\) is also nondecreasing on \(\R\).
\end{enumerate}
\end{lemma}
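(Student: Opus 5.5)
Both parts follow directly from the integral definition of \(\mathcal K\): the integrand is a product of two values of \(G\) evaluated at arguments that depend monotonically on \(x\). The only subtlety is the behaviour of the arguments \((x-1)/u\) and \((x-1)/(1-u)\) at the endpoints \(u\in\{0,1\}\) and when \(x-1\) changes sign. Since \(\{0,1\}\) is a Lebesgue null set, I will work with \(u\in(0,1)\) throughout, where both arguments are finite real numbers.

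For part (1), fix \(x\in\R\) and \(u\in(0,1)\), and set \(y=(x-1)/u\) and \(z=(x-1)/(1-u)\). Since \(G\) and \(H\) take values in \([0,1]\), we have \(0\le G(y)\le H(y)\) and \(0\le G(z)\le H(z)\). Hence
\[
G(y)G(z)\le H(y)G(z)\le H(y)H(z),
\]
where nonnegativity of both factors is what justifies multiplying the inequalities. Integrating over \(u\in(0,1)\), which is legitimate because the integrands are measurable and bounded by \(1\), gives \((\mathcal K G)(x)\le(\mathcal K H)(x)\).

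For part (2), fix \(u\in(0,1)\) and take \(x\le x'\). The maps \(x\mapsto (x-1)/u\) and \(x\mapsto (x-1)/(1-u)\) are nondecreasing, since \(u>0\) and \(1-u>0\). This holds for every sign of \(x-1\), because they are affine maps with positive slope. As \(G\) is nondecreasing, each factor of the integrand is nondecreasing in \(x\). Each factor is also nonnegative, so their product is nondecreasing by the same product argument as in part (1). Integrating over \(u\) then yields \((\mathcal K G)(x)\le(\mathcal K G)(x')\).

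There is no real obstacle here. The one point to state explicitly is that nonnegativity of the factors is what allows pointwise inequalities to be multiplied. Measurability of the integrands has already been assumed in the paper's definition of \(\mathcal K\).
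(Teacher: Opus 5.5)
Your proof is correct and follows essentially the same route as the paper. For each fixed $u\in(0,1)$ you compare the nonnegative integrands pointwise, using $G\le H$ in part (1) and, in part (2), monotonicity of $G$ composed with the increasing affine maps $x\mapsto (x-1)/u$ and $x\mapsto (x-1)/(1-u)$, and then you integrate over $u$. Your explicit chain $G(y)G(z)\le H(y)G(z)\le H(y)H(z)$ just spells out the nonnegativity step that the paper states in one line.
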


\begin{proof}
Assume first that \(G\le H\) pointwise.
Then for every \(x\in\R\) and every \(u\in(0,1)\),
\[
G\!\left(\frac{x-1}{u}\right)
G\!\left(\frac{x-1}{1-u}\right)
\le
H\!\left(\frac{x-1}{u}\right)
H\!\left(\frac{x-1}{1-u}\right),
\]
since all factors are nonnegative.
Integrating over \(u\in(0,1)\) yields
\[
(\mathcal K G)(x)\le (\mathcal K H)(x),
\qquad x\in\R.
\]

Now suppose that \(G\) is nondecreasing, and let \(x\le y\).
For every \(u\in(0,1)\),
\[
\frac{x-1}{u}\le \frac{y-1}{u},
\qquad
\frac{x-1}{1-u}\le \frac{y-1}{1-u}.
\]
Hence, by monotonicity of \(G\),
\[
G\!\left(\frac{x-1}{u}\right)
G\!\left(\frac{x-1}{1-u}\right)
\le
G\!\left(\frac{y-1}{u}\right)
G\!\left(\frac{y-1}{1-u}\right).
\]
Integrating over \(u\in(0,1)\) gives
\[
(\mathcal K G)(x)\le (\mathcal K G)(y).
\]
Thus \(\mathcal K G\) is nondecreasing.
\end{proof}

\begin{corollary}
\label{cor:L_n_iteration_properties}
Assume that \(L_0:\R\to[0,1]\) is measurable, and let
\[
L_{n+1}\defeq \mathcal K L_n,
\qquad n\ge 0.
\]

\begin{enumerate}
\item If \(L_0\) is nondecreasing, then every \(L_n\) is nondecreasing.

\item If \(L_0\le F\) on \(\R\), then
\[
L_n\le F
\qquad\text{on }\R
\]
for every \(n\ge 0\).

\item If \(L_0\le L_1\) on \(\R\), then
\[
L_n\le L_{n+1}
\qquad\text{on }\R
\]
for every \(n\ge 0\).
\end{enumerate}
\end{corollary}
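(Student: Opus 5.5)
All three items follow by induction on \(n\) from \Cref{lem:K-monotone}, together with the fixed-point identity \(F=\mathcal K F\) of \Cref{prop:distribution-function-equation}. Measurability of each \(L_n\), and the bound \(0\le L_n\le 1\), have already been noted, so \(\mathcal K\) may legitimately be applied at every step.

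\emph{Item 1.} The base case is the hypothesis on \(L_0\). If \(L_n\) is nondecreasing, then part 2 of \Cref{lem:K-monotone} shows that \(L_{n+1}=\mathcal K L_n\) is nondecreasing.

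\emph{Item 2.} Again the base case is the hypothesis \(L_0\le F\). Assume \(L_n\le F\) pointwise on \(\R\). Since \(F\) is measurable with values in \([0,1]\), part 1 of \Cref{lem:K-monotone} gives \(\mathcal K L_n\le \mathcal K F\). Then
\[
L_{n+1}=\mathcal K L_n\le \mathcal K F=F,
\]
where the last equality is \Cref{eq:F-fixed-point}, valid for every \(x\in\R\).

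\emph{Item 3.} The base case \(L_0\le L_1\) is assumed. If \(L_n\le L_{n+1}\), then part 1 of \Cref{lem:K-monotone} gives
\[
L_{n+1}=\mathcal K L_n\le \mathcal K L_{n+1}=L_{n+2}.
\]

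None of these steps is a genuine obstacle. The only point that needs care is that \Cref{lem:K-monotone} requires its inputs to be measurable and \([0,1]\)-valued. This holds for \(L_n\) by the earlier remark that \(\mathcal K\) preserves both properties, and it holds for \(F\) because \(F\) is a distribution function.
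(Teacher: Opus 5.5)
Your proof is correct and follows the paper's argument exactly. Each item is an induction on \(n\): part 2 of \Cref{lem:K-monotone} gives item 1, part 1 together with \(F=\mathcal K F\) gives item 2, and part 1 alone gives item 3, with your measurability remark simply making explicit what the paper leaves implicit.
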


\begin{proof}
Each claim follows by induction on \(n\).

For (1), if \(L_n\) is nondecreasing, then \(\mathcal K L_n=L_{n+1}\) is nondecreasing by
\Cref{lem:K-monotone}.

For (2), if \(L_n\le F\), then the order-preserving property of \(\mathcal K\) and the identity
\(F=\mathcal K F\) give
\[
L_{n+1}=\mathcal K L_n\le \mathcal K F = F.
\]

For (3), if \(L_n\le L_{n+1}\), then again by order preservation,
\[
L_{n+1}=\mathcal K L_n\le \mathcal K L_{n+1}=L_{n+2}.
\]
This proves the result.
\end{proof}

\subsection{A conservative mesh scheme}
\label{subsec:mesh-scheme}

To turn \Cref{prop:mean-upper-general} into an explicit numerical bound, we replace the
operator \(\mathcal K\) by a conservative lower approximation on \([2,A)\). If
\(L:\R\to[0,1]\) is nondecreasing, then for every \(x\in\R\) and every \(0\leq a < b \leq 1\),
\[
\int_a^b
L\!\left(\frac{x-1}{u}\right)
L\!\left(\frac{x-1}{1-u}\right)\dd u
\ge
(b-a)\,
L\!\left(\frac{x-1}{b}\right)
L\!\left(\frac{x-1}{1-a}\right).
\]
This leads to the following lower Riemann-type approximation of \((\mathcal K L)(x)\).

Fix meshes
\[
0=u_0<u_1<\cdots<u_M=1
\qquad\text{and}\qquad
2=x_0<x_1<\cdots<x_N=A.
\]
Let \(L:\R\to[0,1]\) be nondecreasing.

For \(k\in\{0,\dots,N-1\}\), define
\begin{equation}\label{eq:Q-definition}
(\mathcal QL)(x_k)
\defeq
\sum_{r=0}^{M-1}
(u_{r+1}-u_r)\,
L\!\left(\frac{x_k-1}{u_{r+1}}\right)
L\!\left(\frac{x_k-1}{1-u_r}\right).
\end{equation}

We then define the lower mesh operator \(\underline{\mathcal K}\) by
\begin{equation}\label{eq:K-lower-definition}
(\underline{\mathcal K}L)(x)\defeq
\begin{cases}
0, & x<2,\\[1mm]
(\mathcal QL)(x_k), & x\in[x_k,x_{k+1}),\quad 0\le k\le N-1,\\[1mm]
\max\!\bigl\{(\mathcal QL)(x_{N-1}),\,L_0(x)\bigr\}, & x\ge A.
\end{cases}
\end{equation}

\begin{proposition}[Conservative discretization of \(\mathcal K\)]
\label{prop:mesh-lower-bound}
Let \(L:\R\to[0,1]\) be nondecreasing. Then \(\underline{\mathcal K}L\) is nondecreasing, and
\[
(\underline{\mathcal K}L)(x)\le (\mathcal K L)(x),
\qquad x\in[2,A).
\]
\end{proposition}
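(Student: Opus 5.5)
The plan is to check the two assertions separately. Both rest only on the monotonicity of \(L\) and on the displayed elementary integral inequality just before \eqref{eq:Q-definition}.

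\emph{Pointwise lower bound on \([2,A)\).} Fix \(x\in[2,A)\) and let \(k\) be the index with \(x\in[x_k,x_{k+1})\). Since \(L\) is nondecreasing, \Cref{lem:K-monotone}(2) shows that \(\mathcal K L\) is nondecreasing, so \((\mathcal K L)(x)\ge(\mathcal K L)(x_k)\). It then suffices to prove \((\mathcal K L)(x_k)\ge(\mathcal QL)(x_k)\). To do this I will split \(\int_0^1\) into the pieces \(\int_{u_r}^{u_{r+1}}\), \(r=0,\dots,M-1\), and bound each piece from below.

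Because \(x_k\ge 2\), we have \(x_k-1\ge 1>0\). Hence for \(u\in(u_r,u_{r+1})\subseteq(0,1)\),
\[
\frac{x_k-1}{u}\ge\frac{x_k-1}{u_{r+1}},\qquad \frac{x_k-1}{1-u}\ge\frac{x_k-1}{1-u_r},
\]
and both right-hand sides are finite: \(u_{r+1}>0\) always, and \(1-u_r>0\) since \(r\le M-1\). By monotonicity of \(L\) and nonnegativity of its values, the integrand on this subinterval is at least \(L\bigl(\frac{x_k-1}{u_{r+1}}\bigr)L\bigl(\frac{x_k-1}{1-u_r}\bigr)\). Integrating gives the \(r\)-th summand of \eqref{eq:Q-definition}. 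The endpoints \(u=0,1\), where the arguments are formally infinite, form a null set and do not affect the integral. Summing over \(r\) yields \((\mathcal K L)(x_k)\ge(\mathcal QL)(x_k)=(\underline{\mathcal K}L)(x)\).

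\emph{Monotonicity of \(\underline{\mathcal K}L\).} First, each summand in \eqref{eq:Q-definition} is nondecreasing in \(x_k\), since both arguments of \(L\) increase with \(x_k\) and \(L\ge 0\) is nondecreasing. Hence \(k\mapsto(\mathcal QL)(x_k)\) is nondecreasing, so \(\underline{\mathcal K}L\) is a nondecreasing step function on \([2,A)\). Next, the value \(0\) on \((-\infty,2)\) lies below \((\mathcal QL)(x_0)\ge 0\). Finally, on \([A,\infty)\) the value \(\max\{(\mathcal QL)(x_{N-1}),L_0(x)\}\) is at least \((\mathcal QL)(x_{N-1})\), which is the last value on \([2,A)\).

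The only delicate point is the monotonicity on \([A,\infty)\) itself. It requires \(x\mapsto L_0(x)\) to be nondecreasing, so I will invoke the standing convention that the initial function \(L_0\) of the iteration is a nondecreasing map into \([0,1]\), as in \Cref{cor:L_n_iteration_properties}(1). Then the maximum of a constant and a nondecreasing function is nondecreasing, and the three pieces glue together into a nondecreasing function on \(\R\).
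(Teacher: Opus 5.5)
Your proof is correct and matches the paper's argument step by step: you bound the integrand on each \([u_r,u_{r+1}]\) by its value at the endpoints to get \((\mathcal K L)(x_k)\ge(\mathcal QL)(x_k)\), use \Cref{lem:K-monotone} to pass to \(x\in[x_k,x_{k+1})\), and derive monotonicity from each summand being nondecreasing in \(x_k\). Your additional remarks are sound and make explicit two things the paper's proof leaves tacit when it says the definition on \((-\infty,2)\) and \([A,\infty)\) ``preserves monotonicity'': that \(x_k-1>0\) fixes the direction of the inequalities, and that monotonicity on \([A,\infty)\) requires \(L_0\) to be nondecreasing.
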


\begin{proof}
Fix \(k\in\{0,\dots,N-1\}\). For \(u\in[u_r,u_{r+1}]\), monotonicity of \(L\) yields
\[
L\!\left(\frac{x_k-1}{u}\right)
L\!\left(\frac{x_k-1}{1-u}\right)
\ge
L\!\left(\frac{x_k-1}{u_{r+1}}\right)
L\!\left(\frac{x_k-1}{1-u_r}\right).
\]
Integrating over \([u_r,u_{r+1}]\) and summing in \(r\) gives
\[
(\mathcal K L)(x_k)\ge (\mathcal QL)(x_k).
\]
Since \(\mathcal K L\) is nondecreasing by \Cref{lem:K-monotone}, for every
\(x\in[x_k,x_{k+1})\),
\[
(\mathcal K L)(x)\ge (\mathcal K L)(x_k)\ge (\mathcal QL)(x_k)
=(\underline{\mathcal K}L)(x).
\]
This proves
\[
(\underline{\mathcal K}L)(x)\le (\mathcal K L)(x),
\qquad x\in[2,A).
\]

For monotonicity, each summand in \((\mathcal QL)(x_k)\) is nondecreasing in \(x_k\), so
\(k\mapsto (\mathcal QL)(x_k)\) is nondecreasing. Hence \(\underline{\mathcal K}L\) is
nondecreasing on \([2,A)\). The definition on \((-\infty,2)\) and \([A,\infty)\) preserves
monotonicity, so \(\underline{\mathcal K}L\) is nondecreasing on \(\R\).
\end{proof}

Starting from a nondecreasing function \(L_0:\R\to[0,1]\) such that
\[
L_0(x)\le F(x),
\qquad x\in\R,
\]
define recursively
\begin{equation}\label{eq:discrete-iteration}
L_{m+1}\defeq \underline{\mathcal K}\,L_m,
\qquad m\ge 0.
\end{equation}

\begin{corollary}
\label{cor:discrete-iteration-below-F}
For every \(m\ge 0\), the function \(L_m\) is nondecreasing and satisfies
\[
L_m(x)\le F(x),
\qquad x\in\R.
\]
\end{corollary}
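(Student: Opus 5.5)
The plan is to argue by induction on \(m\), carrying both properties (monotonicity and domination by \(F\)) together. This is necessary because \Cref{prop:mesh-lower-bound} only applies to nondecreasing inputs. The base case \(m=0\) is exactly the standing assumption on \(L_0\): it is nondecreasing, takes values in \([0,1]\), and satisfies \(L_0\le F\) on \(\R\). Every nondecreasing function is Borel measurable, so each \(L_m\) is a legitimate argument for \(\mathcal K\) and \Cref{lem:K-monotone} can be used freely.

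For the inductive step, assume \(L_m\) is nondecreasing with \(L_m\le F\). Monotonicity of \(L_{m+1}=\underline{\mathcal K}L_m\) is immediate from \Cref{prop:mesh-lower-bound}. For the domination \(L_{m+1}\le F\), I would split \(\R\) into three regions matching the cases in \Cref{eq:K-lower-definition}.

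\emph{Region \(x<2\).} Here \(L_{m+1}(x)=0\le F(x)\).

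\emph{Region \(x\in[2,A)\).} \Cref{prop:mesh-lower-bound} gives \(L_{m+1}(x)\le(\mathcal K L_m)(x)\). The order-preservation part of \Cref{lem:K-monotone}, together with \(L_m\le F\) and the fixed-point identity \(F=\mathcal K F\) from \Cref{prop:distribution-function-equation}, then yields
\[
(\mathcal K L_m)(x)\le(\mathcal K F)(x)=F(x).
\]

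\emph{Region \(x\ge A\).} This is the only place needing a separate remark, since \(L_{m+1}(x)=\max\{(\mathcal QL_m)(x_{N-1}),L_0(x)\}\). The second entry satisfies \(L_0(x)\le F(x)\) by hypothesis. For the first entry, the proof of \Cref{prop:mesh-lower-bound} showed \((\mathcal QL_m)(x_{N-1})\le(\mathcal K L_m)(x_{N-1})\). By the middle region this is at most \(F(x_{N-1})\), and since \(F\) is a distribution function and \(x_{N-1}<A\le x\), we get \(F(x_{N-1})\le F(x)\). Hence the maximum is at most \(F(x)\).

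This closes the induction. I expect no real obstacle. The only subtle point is the region \(x\ge A\), where the operator is not compared to \(\mathcal K\) at all; there domination must come from the monotonicity of \(F\) and the hypothesis on \(L_0\) rather than from \Cref{prop:mesh-lower-bound}.
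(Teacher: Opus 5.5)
Your proposal is correct and follows essentially the same route as the paper: an induction carrying monotonicity and domination by \(F\) together, with the three-region split \(x<2\), \(x\in[2,A)\), \(x\ge A\). In the last region you use \((\mathcal Q L_m)(x_{N-1})\le F(x_{N-1})\le F(x)\) together with \(L_0\le F\), exactly as the paper does. Your added remark that nondecreasing functions are measurable, so that \(\mathcal K\) may be applied to each \(L_m\), is a harmless extra detail.
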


\begin{proof}
We argue by induction on \(m\). The case \(m=0\) is immediate.

Assume that \(L_m\) is nondecreasing and that \(L_m\le F\) on \(\R\).
Then \(L_{m+1}=\underline{\mathcal K}\,L_m\) is nondecreasing by
\Cref{prop:mesh-lower-bound}. It remains to show that \(L_{m+1}\le F\).

If \(x\in[2,A)\), then by \Cref{prop:mesh-lower-bound},
\[
L_{m+1}(x)
=
(\underline{\mathcal K}\,L_m)(x)
\le
(\mathcal K L_m)(x),
\]
and since \(L_m\le F\), the order-preserving property from \Cref{lem:K-monotone} gives
\[
(\mathcal K L_m)(x)\le (\mathcal K F)(x)=F(x).
\]

If \(x<2\), then \(L_{m+1}(x)=0\le F(x)\).

Finally, if \(x\ge A\), then by definition
\[
L_{m+1}(x)
=
\max\!\bigl\{(\mathcal Q L_m)(x_{N-1}),\,L_0(x)\bigr\}.
\]
Since \(x_{N-1}\in[2,A)\), the first part of the argument applied at \(x=x_{N-1}\) yields
\[
(\mathcal Q L_m)(x_{N-1})
=
L_{m+1}(x_{N-1})
\le
F(x_{N-1})
\le
F(x),
\]
while \(L_0(x)\le F(x)\) by assumption. Hence \(L_{m+1}(x)\le F(x)\).

This completes the induction.
\end{proof}

\subsection{Algorithmic realization of the mesh scheme}

We now describe the finite-dimensional realization of the conservative mesh iteration from
\Cref{subsec:mesh-scheme}. Fix a nondecreasing function \(L_0:\R\to[0,1]\) such that
\(L_0\le F\), and fix meshes
\[
0=u_0<u_1<\cdots<u_M=1
\qquad\text{and}\qquad
2=x_0<x_1<\cdots<x_N=A.
\]
Then each iterate \(L_m\) is determined by its values on the grid
\(\{x_0,\dots,x_{N-1}\}\), since
\[
L_m(x)=
\begin{cases}
0, & x<2,\\[1mm]
L_m(x_k), & x\in[x_k,x_{k+1}),\quad 0\le k\le N-1,\\[1mm]
\max\!\bigl\{L_m(x_{N-1}),\,L_0(x)\bigr\}, & x\ge A.
\end{cases}
\]
Accordingly, we represent \(L_m\) on \([2,A)\) by the vector
\[
v^{(m)}\defeq \bigl(L_m(x_0),\dots,L_m(x_{N-1})\bigr)\in[0,1]^N.
\]

The next iterate is then obtained from
\[
v_k^{(m+1)}
=
\sum_{r=0}^{M-1}
(u_{r+1}-u_r)\,
L_m\!\left(\frac{x_k-1}{u_{r+1}}\right)
L_m\!\left(\frac{x_k-1}{1-u_r}\right),
\qquad 0\le k\le N-1.
\]
Thus, to pass from step \(m\) to step \(m+1\), it suffices to know the vector \(v^{(m)}\) and to
evaluate \(L_0\) at the finitely many points
\[
\frac{x_k-1}{u_{r+1}},
\qquad
\frac{x_k-1}{1-u_r},
\qquad
0\le k\le N-1,\quad 0\le r\le M-1.
\]

For each \(m\ge 1\), the corresponding upper bound on \(\EE{\Svar}\) is
\[
U_m
\defeq
2+
\sum_{k=0}^{N-1}(x_{k+1}-x_k)\bigl(1-v_k^{(m)}\bigr)
+
\frac{\exp\!\bigl(-A\log(A/2)+A\bigr)}{\log(A/2)}.
\]
By \Cref{cor:discrete-iteration-below-F}, every iterate produced in this way remains a
nondecreasing lower bound for \(F\), and hence each \(U_m\) is a rigorous upper bound on
\(\EE{\Svar}\).

\subsection{A floating-point pilot computation}
\label{subsec:floating-point-pilot}

We conclude with a floating-point illustration of the conservative mesh scheme from the
previous subsection. This computation is included only as a numerical experiment and not as a
certified proof.

We take as input the function
\[
L_0(x)=
\begin{cases}
0, & x<2,\\[1mm]
\max\!\bigl\{0,\,1-\exp\!\bigl(-x\log(x/2)+x\bigr)\bigr\}, & x\ge 2.
\end{cases}
\]
By \Cref{cor:chernoff-pointwise}, this function satisfies \(L_0\le F\) on \(\R\).

For the basic pilot computation, we choose
\[
A=10,
\qquad
N=M=100,
\]
with uniform meshes
\[
2=x_0<x_1<\cdots<x_N=A,
\qquad
0=u_0<u_1<\cdots<u_M=1.
\]
Using the finite-dimensional scheme from the previous subsection, we compute \(50\) mesh
iterates \(L_m\). The corresponding upper bounds \(U_m\) decrease rapidly during the first
several iterations and then appear to stabilize near \(4.34\); see
\Cref{fig:mean-bound-plot}. The associated mesh iterates are shown in
\Cref{fig:iterates-plot}, where the curve labelled \(m=0\) is the step representation of the
input function \(L_0\).

We also include a higher-resolution experiment with the same value \(A=10\), but with
\[
N=M=10^4,
\qquad
m_{\max}=50,
\]
shown in \Cref{fig:iterates-plot-2}. In this case, the upper bound after \(50\) iterations is
approximately \(4.09\).

The computation was carried out in standard floating-point arithmetic in Google Colab.
Accordingly, the resulting figures should be interpreted only as a pilot experiment and not as a
certified numerical proof. A fully rigorous implementation would require interval arithmetic
together with explicit control of discretization and rounding errors.

\begin{figure}[htbp]
\centering
\includegraphics[width=0.8\textwidth]{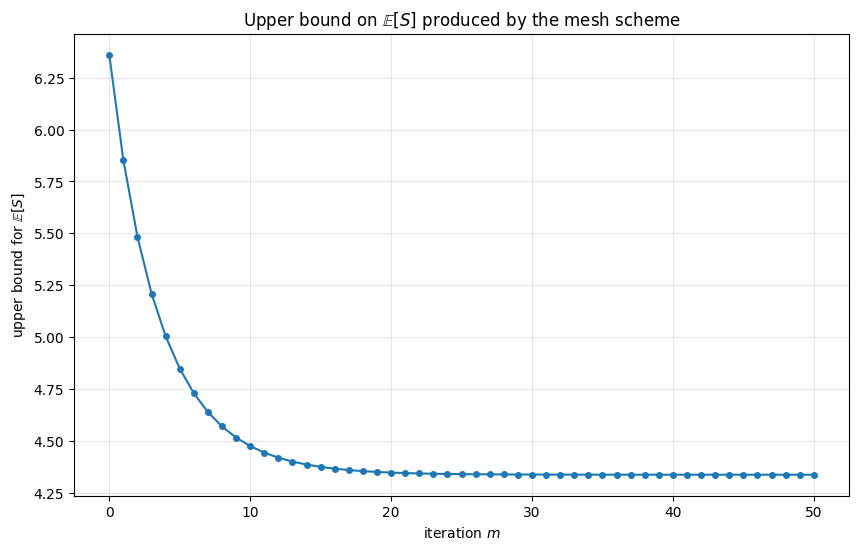}
\caption{Upper bounds \(U_m\) produced by the mesh scheme as a function of the iteration
number.}
\label{fig:mean-bound-plot}
\end{figure}

\begin{figure}[H]
\centering

\begin{subfigure}{\textwidth}
    \centering
    \includegraphics[width=1.0\textwidth]{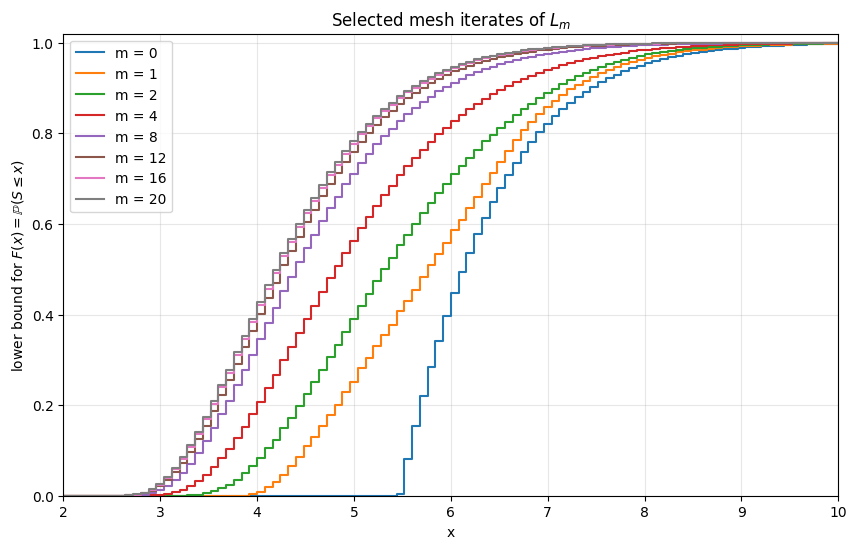}
    \caption{Selected mesh iterates on the interval \([2,A]\). The curve labelled \(m=0\) is the step
    representation of the input function \(L_0\); the remaining curves are produced by the
    conservative mesh iteration. In the present pilot computation, the profiles move upward toward
    the distribution function \(F(x)=\PP{\Svar\le x}\).}
    \label{fig:iterates-plot}
\end{subfigure}

\medskip

\begin{subfigure}{\textwidth}
    \centering
    \includegraphics[width=1.0\textwidth]{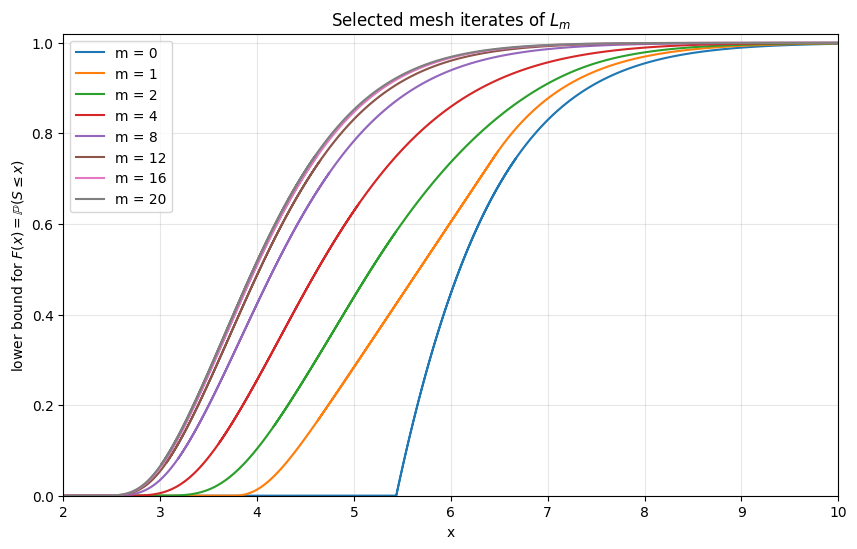}
    \caption{Selected mesh iterates of \(L_m\) on the interval \([2,A]\) for the floating-point experiment with
    \(A=10\), \(N=M=10^4\), and \(m_{\max}=50\).}
    \label{fig:iterates-plot-2}
\end{subfigure}

\caption{Selected mesh iterates produced by the conservative mesh scheme for two floating-point experiments.}
\label{fig:iterates-comparison}
\end{figure}

\newpage
\appendix
\crefalias{section}{appendix}
\section{Auxiliary estimates}\label{app:aux}

We record two standard estimates that will be used repeatedly in the proof of the tail bounds:
a uniform small-ball bound for Gamma random variables and Stirling's formula.
Throughout, \(\Gam(m,1)\) denotes a Gamma random variable with shape parameter \(m \ge 1\) and rate \(1\).

\begin{lemma}[Uniform Gamma small-ball bound]
\label{lem:gamma-small-ball}
For every integer \(m \ge 1\) and every \(x \in [0,1]\),
\[
e^{-x}\frac{x^m}{m!}
\;\le\;
\PP{\Gam(m,1)\le x}
\;\le\;
\frac{x^m}{m!}.
\]
Consequently, uniformly for \(m \ge 1\) and \(x \in [0,1]\),
\[
\PP{\Gam(m,1)\le x}
=
\frac{x^m}{m!}\bigl(1+\bigO(x)\bigr).
\]
\end{lemma}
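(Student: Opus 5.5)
The plan is to work directly with the explicit Erlang distribution function. For integer \(m\ge 1\),
\[
\PP{\Gam(m,1)\le x}=\int_0^x \frac{s^{m-1}e^{-s}}{(m-1)!}\,\dd s .
\]
Both inequalities will come from bounding the factor \(e^{-s}\) on the interval \(s\in[0,x]\), where \(e^{-x}\le e^{-s}\le 1\).

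For the upper bound, replace \(e^{-s}\) by \(1\). This gives
\[
\PP{\Gam(m,1)\le x}\le \int_0^x \frac{s^{m-1}}{(m-1)!}\,\dd s=\frac{x^m}{m!},
\]
and in fact this holds for every \(x\ge 0\), not only \(x\in[0,1]\). This is convenient, since that form is the one invoked in the proof of \Cref{lem:pjl-upper}.

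For the lower bound, replace \(e^{-s}\) by its minimum \(e^{-x}\) on \([0,x]\), which gives
\[
\PP{\Gam(m,1)\le x}\ge e^{-x}\,\frac{x^m}{m!}.
\]
An alternative route is the Poisson identity \(\PP{\Gam(m,1)\le x}=\PP{\mathrm{Poi}(x)\ge m}\ge e^{-x}x^m/m!\), obtained by keeping only the \(k=m\) term of the Poisson sum. That identity would also reproduce the upper bound via \(\sum_{k\ge m}x^k/k!\le (x^m/m!)e^{x}\), but this is weaker by the factor \(e^{x}\), so I will use the integral argument instead.

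For the consequence, combine the two bounds into
\[
e^{-x}\le \frac{\PP{\Gam(m,1)\le x}}{x^m/m!}\le 1 .
\]
Then use \(1-x\le e^{-x}\), valid for \(x\in[0,1]\). The ratio therefore equals \(1+\bigO(x)\) with implied constant \(1\), and this constant does not depend on \(m\), which gives the uniformity claimed in the statement.

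There is no real obstacle. The only care needed is to check that the error term is uniform in \(m\), and this is immediate because the comparison factor \(e^{-x}\) is independent of \(m\).
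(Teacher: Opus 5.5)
Your proof is correct and is the paper's argument: you sandwich $e^{-s}$ between $e^{-x}$ and $1$ on $[0,x]$ in the Erlang integral, and you note that the ratio bound $e^{-x}\le \PP{\Gam(m,1)\le x}/(x^m/m!)\le 1$ does not depend on $m$, which gives the uniform $1+\bigO(x)$; your remark that the upper bound holds for all $x\ge 0$ is exactly the form used in \Cref{lem:pjl-upper}. One minor slip in your aside: the Poisson route is not weaker, because $e^{-x}\sum_{k\ge m}x^k/k!\le e^{-x}\cdot e^{x}x^m/m!=x^m/m!$ gives the same upper bound.
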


\begin{proof}
By definition,
\[
\PP{\Gam(m,1)\le x}
=
\frac{1}{\Gamma(m)}
\int_0^x s^{m-1}e^{-s}\,\dd s
=
\frac{1}{(m-1)!}
\int_0^x s^{m-1}e^{-s}\,\dd s.
\]
For \(s \in [0,x]\) we have \(e^{-x} \le e^{-s} \le 1\), and therefore
\[
\frac{e^{-x}}{(m-1)!}\int_0^x s^{m-1}\,\dd s
\;\le\;
\PP{\Gam(m,1)\le x}
\;\le\;
\frac{1}{(m-1)!}\int_0^x s^{m-1}\,\dd s.
\]
Evaluating the integral gives
\[
e^{-x}\frac{x^m}{m!}
\;\le\;
\PP{\Gam(m,1)\le x}
\;\le\;
\frac{x^m}{m!},
\]
which proves the first claim.
Since \(e^{-x}=1+\bigO(x)\) uniformly for \(x\in[0,1]\), the second estimate follows immediately.
\end{proof}

\begin{lemma}[Stirling expansion]
\label{lem:stirling}
Uniformly for integers \(n \ge 1\),
\[
\log n!
=
n\log n - n + \frac12 \log(2\pi n) + \bigO(1/n).
\]
\end{lemma}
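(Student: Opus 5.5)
This is the standard Stirling expansion, and I would prove it by the Euler--Maclaurin route, using only the monotonicity of an explicit correction sequence rather than appealing to the Gamma function.

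\emph{Step 1: the correction sequence.} Set
\[
d_n\defeq \log n!-\bigl(n+\tfrac12\bigr)\log n+n,
\qquad n\ge 1.
\]
A direct computation gives
\[
d_n-d_{n+1}=\bigl(n+\tfrac12\bigr)\log\frac{n+1}{n}-1.
\]
Write \(\frac{n+1}{n}=\frac{1+y}{1-y}\) with \(y=1/(2n+1)\). Then \(n+\tfrac12=\frac{1}{2y}\), and the series \(\log\frac{1+y}{1-y}=2\sum_{k\ge0}y^{2k+1}/(2k+1)\) yields
\[
d_n-d_{n+1}=\sum_{k\ge1}\frac{y^{2k}}{2k+1}.
\]

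\emph{Step 2: two-sided bounds on the increments.} Every term above is positive, so \(d_n\) is decreasing. Comparing the series with the geometric series \(\tfrac13\sum_{k\ge1}y^{2k}\) gives
\[
0<d_n-d_{n+1}\le \frac{y^2}{3(1-y^2)}=\frac{1}{12n(n+1)}=\frac1{12}\Bigl(\frac1n-\frac1{n+1}\Bigr).
\]
Consequently \(d_n-\tfrac{1}{12n}\) is nondecreasing, while \(d_n\) is decreasing. Hence \(d_n\) converges to a limit \(C\) and satisfies
\[
0< d_n-C\le \frac{1}{12n}
\qquad\text{for all } n\ge1,
\]
which is exactly the uniform \(\bigO(1/n)\) error.

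\emph{Step 3: identifying the constant.} The only real obstacle is showing \(C=\tfrac12\log(2\pi)\). For this I would use the Wallis product, which follows from the integrals \(I_k=\int_0^{\pi/2}\sin^k x\,\dd x\): one has \(I_k=\frac{k-1}{k}I_{k-2}\), the sequence \(I_k\) is monotone, and \(I_{2n}/I_{2n+1}\to1\). Together these give
\[
\frac{(2^n n!)^4}{((2n)!)^2(2n+1)}\to\frac{\pi}{2}.
\]
Substituting \(\log m!=(m+\tfrac12)\log m-m+C+o(1)\) for \(m=n\) and \(m=2n\) and taking logarithms, the \(n\)-dependent terms cancel. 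What remains is \(2C-\log 2-\log 2=\log(\pi/2)\), i.e. \(e^{2C}=2\pi\) and \(C=\tfrac12\log(2\pi)\).

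\emph{Conclusion.} By definition of \(d_n\),
\[
\log n!=n\log n-n+\tfrac12\log n+d_n .
\]
With \(d_n=\tfrac12\log(2\pi)+\bigO(1/n)\) uniformly in \(n\ge1\), this is the claimed expansion.
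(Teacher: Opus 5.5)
Your proof is correct, but it takes a different route from the paper, which gives no argument at all and simply cites the classical Stirling formula.

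What you give is the elementary Robbins-type derivation. Despite your label, it is not really Euler--Maclaurin: there is no Bernoulli-number remainder, only the series for $\log\frac{1+y}{1-y}$ and a telescoping comparison. The computations all check out:
\begin{itemize}
\item the increment identity $d_n-d_{n+1}=\sum_{k\ge1}y^{2k}/(2k+1)$ with $y=1/(2n+1)$;
\item the bound $d_n-d_{n+1}\le \frac{1}{12n(n+1)}$;
\item the resulting estimate $0<d_n-\frac12\log(2\pi)\le \frac{1}{12n}$ for every $n\ge1$, which gives exactly the uniformity the statement requires, with an explicit constant;
\item the Wallis-product identification $2C-2\log 2=\log(\pi/2)$.
\end{itemize}
The citation buys brevity. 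Your route buys self-containedness and an explicit error constant.

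The paper uses this lemma only once, in computing $\log \mu_j(t)$ with an $o(t)$ error for $j\sim t$. There it needs nothing more than $\log j! = j\log j - j + o(j)$. For the paper's purposes, your Steps 1--2 alone would already suffice, since they show $d_n$ is bounded and hence $\log n! = (n+\frac12)\log n - n + O(1)$. The Wallis step is needed only to prove the lemma in the sharper form stated, with the constant $\frac12\log(2\pi)$.
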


\begin{proof}
This is the classical Stirling formula.
\end{proof}

\begin{lemma}[A simple tail bound for the explicit majorant]
\label{lem:tail-integral-bound}
For every \(A>2\),
\[
\int_A^\infty \exp\!\bigl(-x\log(x/2)+x\bigr)\,\dd x
\le
\frac{\exp\!\bigl(-A\log(A/2)+A\bigr)}{\log(A/2)}.
\]
\end{lemma}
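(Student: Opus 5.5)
The plan is to bound the integrand by an exponential and integrate explicitly. Write
\[
h(x)\defeq x\log(x/2)-x,
\qquad
h'(x)=\log(x/2)+1-1=\log(x/2).
\]
Then the integrand is \(e^{-h(x)}\), and \(h'\) is increasing on \([A,\infty)\) with \(h'(x)\ge \log(A/2)>0\) there, because \(A>2\).

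The key step is convexity of \(h\). Since \(h''(x)=1/x>0\), we have for every \(x\ge A\)
\[
h(x)\ge h(A)+h'(A)(x-A)=h(A)+\log(A/2)\,(x-A).
\]
Equivalently, one can integrate \(h'\ge \log(A/2)\) from \(A\) to \(x\); this gives the same inequality without invoking convexity. It follows that
\[
\int_A^\infty e^{-h(x)}\,\dd x
\le e^{-h(A)}\int_A^\infty e^{-\log(A/2)(x-A)}\,\dd x
=\frac{e^{-h(A)}}{\log(A/2)},
\]
and \(e^{-h(A)}=\exp(-A\log(A/2)+A)\), which is exactly the claimed bound.

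There is no real obstacle here. The only point requiring care is that \(\log(A/2)>0\), so that the exponential integral converges and the division is legitimate; this is precisely where the hypothesis \(A>2\) enters. An alternative, equivalent route is the pointwise bound \(e^{-h(x)}\le e^{-h(x)}\,h'(x)/\log(A/2)\), valid since \(h'(x)\ge\log(A/2)\) on \([A,\infty)\), which integrates exactly to \(e^{-h(A)}/\log(A/2)\) because \(h(x)\to\infty\).
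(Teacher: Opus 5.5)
Your proposal is correct and follows essentially the paper's own argument: the paper also sets \(g(x)=x\log(x/2)-x\) and uses \(g'(x)=\log(x/2)\ge\log(A/2)\) on \([A,\infty)\) to get the linear lower bound \(g(x)\ge g(A)+\log(A/2)(x-A)\). It then integrates the resulting exponential, which is exactly what you do; the paper phrases the linear bound via the mean value theorem rather than convexity or integrating \(h'\), but that difference is only cosmetic.
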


\begin{proof}
Define
\[
g(x)\defeq x\log(x/2)-x,
\qquad x>0.
\]
Then
\[
g'(x)=\log(x/2).
\]
Since \(A>2\), we have \(g'(A)=\log(A/2)>0\). Moreover, for every \(x\ge A\),
the mean value theorem gives
\[
g(x)-g(A)=g'(\xi)(x-A)
\]
for some \(\xi\in[A,x]\). Because \(g'(x)=\log(x/2)\) is increasing, we have
\(g'(\xi)\ge g'(A)=\log(A/2)\), and therefore
\[
g(x)\ge g(A)+\log(A/2)\,(x-A),
\qquad x\ge A.
\]
Exponentiating the negative of both sides yields
\[
\exp\!\bigl(-g(x)\bigr)
\le
\exp\!\bigl(-g(A)\bigr)\exp\!\bigl(-\log(A/2)(x-A)\bigr).
\]
Hence
\[
\int_A^\infty \exp\!\bigl(-g(x)\bigr)\,\dd x
\le
\exp\!\bigl(-g(A)\bigr)
\int_A^\infty \exp\!\bigl(-\log(A/2)(x-A)\bigr)\,\dd x.
\]
After the change of variables \(u=x-A\), this becomes
\[
\int_A^\infty \exp\!\bigl(-g(x)\bigr)\,\dd x
\le
\exp\!\bigl(-g(A)\bigr)\int_0^\infty e^{-\log(A/2)u}\,\dd u
=
\frac{\exp\!\bigl(-g(A)\bigr)}{\log(A/2)}.
\]
Substituting \(g(A)=A\log(A/2)-A\) proves the claim.
\end{proof}

\clearpage
\printbibliography

\end{document}